\def\seq#1#2#3{#1_{#2},\,\ldots,#1_{#3}}
\def\abs#1{\vert#1\vert}
\def\tt{{\underline{t}}}
\def\mm{\underline{m}}
\def\MM{\underline{M}}
\def\1{\underline{1}}
\def\Z{\mathbb Z}
\def\C{\mathbb C}
\def\OO{{\mathcal O}}
\def\DD{{\mathcal D}}
\def\oE{\stackrel{\circ}{E}}
\newtheorem{theorem}{Theorem}
\newtheorem{proposition}{Proposition}
\newenvironment{proof}
{\noindent{\bf Proof\/}.}{{ $\Box$}\smallskip\par}
\title{On the topological type of a set of plane valuations with symmetries
\footnote{Math. Subject Class. 14B05, 13A18,
14R20.
Keywords: finite group actions, Poincar\'e series,  plane valuations, topological
type.}
}
\author{
A.~Campillo,
\and F.~Delgado,\thanks{The first two authors were supported by the grant
MTM2015-65764-C3-1-P
(with the help of FEDER Program).} \and S.M.~Gusein-Zade
\thanks{
The work of the third author (Sections~1 and 2) was supported by the grant
16-11-10018 of the Russian Science Foundation
} }
\date{}
\begin{document}
\def\eps{\varepsilon}

\maketitle

\begin{abstract}
Let $\{C_i : i=1,\ldots,r\}$ be a set of irreducible plane curve singularities. For
an action of a finite group $G$, let $\Delta^{L}(\{t_{a i}\})$ be the Alexander
polynomial in $r\abs{G}$ variables of the algebraic link
$( \bigcup\limits_{i=1}^{r}\bigcup\limits_{a\in G}a C_i )\cap
S^3_{\varepsilon}$
and let $\zeta(\seq t1r) = \Delta^{L}(\seq t11, \seq t22, \ldots, \seq trr)$ with
$\abs{G}$ identical variables in each group. (If $r=1$, $\zeta(t)$ is the monodromy
zeta function of the function germ $\prod\limits_{a\in G} a^*f$, where $f=0$ is
an equation defining the curve $C_1$.)
We prove that $\zeta(\seq t1r)$ determines the topological type of the link $L$. We
prove an analogous statement for plane divisorial valuations formulated in terms of
the Poincar\'e series of a set of valuations.
\end{abstract}

%%%%%%%%%%%%%%%%%%%%%%%%%%%%%%%%%%%%%%%%%%%%%%%%%%%%%%%%%%%%%%%%%
\section{Introduction}\label{sec1}
%%%%%%%%%%%%%%%%%%%%%%%%%%%%%%%%%%%%%%%%%%%%%%%%%%%%%%%%%%%%%%%%%
An equivariant (with respect to an action of a finite group $G$) version of the Poincar\'e series
of a multi-index filtration (defined by a collection of valuations $\{\nu_i\}$, $i=1, \ldots, r$)
was defined in \cite{RMC-2015} as an element $P^G_{\{\nu_i\}}(t_1, \ldots, t_r)$ of the ring
$\widetilde{A}(G)[[t_1, \ldots, t_r]]$ of power series with coefficients in a certain modification
$\widetilde{A}(G)$ of the Burnside ring $A(G)$ of the group $G$. In \cite{DM} it was shown that,
for a filtration on the ring $\OO_{\C^2,0}$ of germs of functions in two variables defined either
by a collection of divisorial valuations or by a collection of curve valuations (in the latter case
with certain exceptions), the series $P^G_{\{\nu_i\}}(t_1, \ldots, t_r)$ determines the (weak)
equivariant topological type of the set of valuations.
(In the case of curve valuations this means the equivariant embedded topological type of the curve or
of the corresponding algebraic link.) (In \cite{FAOM} it was shown that even in the non-equivariant
case (i.~e., for $G=\{e\}$) the Poincar\'e series of a collection including both divisorial and curve
valuations does not determine, in general, the topological type of the set of valuations.)

One has a natural homomorphism $\varphi$ from the ring $\widetilde{A}(G)$ to the ring $\Z$ of integers
which sends a finite $G$-set (with an additional structure) to its number of elements. Applying the
homomorphism $\varphi$ to the equivariant Poincar\'e series $P^G_{\{\nu_i\}}(t_1, \ldots, t_r)$
(that is to the coefficients of it) one gets the series $\zeta(t_1, \ldots, t_r)=
P(t_1, \ldots, t_1, t_2, \ldots, t_2, \ldots, t_r, \ldots, t_r)$ (with $\vert G\vert$ identical variables
in each group), where $P(\{t_{a,i}\})$ is the usual Poincar\'e series (in $r\vert
G\vert$ variables)
of the set of valuations $\{a^*\nu_i\}$, $a\in G$, $i=1, \ldots, r$.

If $\nu_i$, $i=1, \ldots, r$, are the curve valuations corresponding to irreducible plane curve
singularities $(C_i,0)\subset(\C^2,0)$, then $P(\{t_{a,i}\})$ coincides with the
Alexander polynomial
$\Delta^{L}(\{t_{a,i}\})$ of the algebraic link
$L=\left(\bigcup\limits_{i=1}^r\bigcup\limits_{a\in G} aC_i\right)\cap
S^3_{\varepsilon}$,
where $S^3_{\varepsilon}$ is the sphere of small radius $\varepsilon$ centred at the origin in $\C^2$: \cite{Duke}.
Strictly speaking this holds if all the curves $aC_i$ are different.
If, in a collection
of plane curve singularities $(Y_i,0)\subset(\C^2,0)$, $i=1, \ldots, s$, two curves coincide (say,
$Y_{s-1}$ and $Y_s$ and only they), the Alexander polynomial of the corresponding link
$\left(\bigcup_{i=1}^s Y_i\right)\cap S^3_{\varepsilon}$ should be defined as
$\Delta^{L'}(t_1, \ldots, t_{s-2}, t_{s-1}\cdot t_{s})$, where
$\Delta^{L'}(t_1, \ldots, t_{s-2}, t_{s-1})$ is the usual Alexander polynomial of the link
$L'=\left(\bigcup_{i=1}^{s-1} Y_i\right)\cap S^3_{\varepsilon}$ with $(s-1)$
components.
If $r=1$ and the curve $C_1$ is defined by an equation $f_1=0$ ($f_1\in \OO_{\C^2,0}$), the
series $P(t,\ldots, t)$ coincides with the monodromy zeta function of the germ
$\prod\limits_{a\in G}a^*f_1$ (see, e.\ g., \cite{AGV}).

For a collection $\{\nu_i\}$, $i=1,\ldots, r$, consisting of divisorial and curve valuations on
$\OO_{\C^2,0}$ one has the following A'Campo type formula for the Poincar\'e series: \cite{Duke, DG}.
Let $\pi:(X,\DD)\to(\C^2,0)$, $\DD=\pi^{-1}(0)$, be a resolution of the collection $\{\nu_i\}$
of valuations. This means that $\pi$ is a modification of the plane (by a finite number of
blow-ups of points) such that all the divisors defining the divisorial valuations from the collection
are present in the exceptional divisor $\DD$ and the strict transforms of all the curves defining the
curve valuations do not intersect each other in $X$ and are transversal to $\DD$ (at
its smooth points).
All the components $E_{\sigma}$ of the exceptional divisor $\DD$ are isomorphic to the complex
projective line. Let $\oE_{\sigma}$ be ``the smooth part of $E_{\sigma}$ in the resolution'', that is
$E_{\sigma}$ itself minus the intersection points with other components of the exceptional divisor
$\DD$ and with the strict transforms of the curves defining the curve valuation. A {\it curvette}
at the component $E_{\sigma}$ is the image in $(\C^2,0)$ of a smooth curve germ transversal to
$\oE_{\sigma}$ at a point of it. Let $\varphi_{\sigma}=0$ be an equation of a curvette at $E_{\sigma}$
and let $m_{\sigma i}:=\nu_i(\varphi_{\sigma})$,
$\mm_{\sigma}:=(m_{\sigma 1},\ldots,m_{\sigma r})\in \Z_{\ge 0}^r$.
Then one has
\begin{equation}\label{ACampo}
P_{\{\nu_i\}}(\tt)=\prod_{\sigma}\left(1-\tt^{\mm_{\sigma}}\right)^{-\chi(\oE_{\sigma})},
\end{equation}
where $\tt=(t_1,\ldots, t_r)$, $\tt^{\mm_{\sigma}}=t_1^{m_{\sigma 1}}\cdots t_r^{m_{\sigma r}}$,
$\chi(\cdot)$ is the Euler characteristic.

A formula for the Alexander polynomial $\Delta^L(t_1,\ldots, t_r)$ in several variables
of an algebraic link $L$ in terms of a resolution of the corresponding curve can be found in \cite{EN}.
If all the valuations in the collection $\{\nu_i\}$ are curve ones, the equation~(\ref{ACampo}) and
the formula from \cite{EN} for the corresponding algebraic link $L$ give the same results, i.~e.,
$$
P_{\{\nu_i\}}(t_1,\ldots, t_r)=\Delta^L(t_1,\ldots, t_r).
$$

Here we show, in particular, that the ``usual'' (not equivariant) topological type of the curve
$\bigcup\limits_{i=1}^r\bigcup\limits_{a\in G} aC_i$ is determined by the series
$\zeta(t_1, \ldots, t_r)$
(that is by the described reduction of the Alexander polynomial in $r\vert G\vert$ variables).
We prove an analogous statement for a collection of plane divisorial valuations (with certain
precisely described exceptions).

One can get the impression that the results here are somewhat weaker than those in \cite{DM}
because from formal point of view they describe the usual (not equivariant) topology of the set
of valuations (of the curve if all the valuations are curve ones). However, the difference here
is not too big. In the setting of \cite{DM}, i.e., if the action of the group is induced by its
action (a representation) on $(\C^2,0)$, the series $\zeta(\ldots)$ permits to restore the (minimal)
equivariant resolution graph of the set of valuations. The only object which is missed is the
representation of the group on $\C^2$. This cannot be read from the series $\zeta(\ldots)$ and for that
in \cite{DM} one used the equivariant Poincar\'e series. (Even in that case this was possible not
always, but with certain exceptions.) Thus if the representation on $\C^2$ is given in advance,
the outputs of the results of \cite{DM} and those here are essentially the same. On the other hand here
the information is extracted from a considerably ``smaller'' invariant: a series with coefficients
in $\Z$, not in the Burnside ring of the group. We believe that this makes the results considerably
stronger. (This also makes the proofs somewhat more complicated.)

\medskip

It is well known that the Alexander polynomial determines the topological type of an algebraic knot.
Moreover, the topological type of an algebraic link $L=C\cap S^3_{\varepsilon}$ with $r$ components
($(C,0)\subset(\C^2,0)$ is a plane curve singularity)
is determined by its Alexander polynomial $\Delta^L(t_1, \ldots, t_r)$ in several variables (the number
of variables being the number $r$ of components of the link): \cite{Yamamoto}. On the other hand it is
known that the Alexander polynomial in one variable (that is $\Delta^L(t, \ldots, t)$) does not determine
the topological type of an algebraic link with at least two components (see, e.~g., an example in
\cite{Yamamoto}).

The statement above says, in particular, that, if the curve $(C,0)\subset(\C^2,0)$ defining the link $L$ consists of the
$a$-shifts of an irreducible plane curve singularity for all $a\in G$, its Alexander
polynomial in one
variable determines the topological type of the curve (or of the link). In this case all the components
of the curve $C$ are equisingular, that is have the same topological type. The attempt to understand
whether it is really necessary to have a symmetry (defined by a group) between the branches of the curve,
or it is sufficient that all the components of the curve are equisingular, led to the example
of two algebraic links with unknotted components and with equal Alexander polynomials
in one variable.
In terms of the Singularity Theory this example can be interpreted in the following way. The link
corresponding to a curve $\{f=0\}$ consists of unknotted components if and only if $f$ is the product of
function germs without critical points (that is of germs right equivalent to a coordinate function).
In this way the example gives two functions of this sort with equal monodromy zeta functions.

%%%%%%%%%%%%%%%%%%%%%%%%%%%%%%%%%%%%%%%%
\section{Topology of divisorial valuations}\label{sec2}
%%%%%%%%%%%%%%%%%%%%%%%%%%%%%%%%%%

In \cite{DM} we considered collections of curve (or divisorial) plane valuations consisting of
the orbits of some of them under an action of a finite group $G$ on the plane $(\C^2,0)$.
Here we consider a slightly more general setting
which can be applied to some other situations.

Let $\pi:(X,\DD)\to(\C^2,0)$, $\DD=\pi^{-1}(0)$, be a modification of the plane by a finite number of
blow-ups of points. All the components $E_{\sigma}$ of the exceptional divisor $\DD$ are isomorphic to
the complex projective line. The (dual) graph $\Gamma$ of the modification is defined in the following way.
Its vertices are in a one-to-one correspondence with the components $E_{\sigma}$ of the exceptional
divisor $\DD$. Two vertices are connected by an edge if and only if the corresponding components intersect.
The graph $\Gamma$ is a tree.
Each vertex $\sigma$ of $\Gamma$ (that is a component $E_{\sigma}$ of the exceptional divisor $\DD$) has its age:
the minimal number of blow-ups needed to create the component. The dual graph of a modification
with the ages of the vertices determines the combinatorics of the modification.

A divisorial valuation on the ring $\OO_{\C^2,0}$ of germs of functions in
two variables is defined by a component of the exceptional divisor of a modification. This modification
is called a {\it resolution\/} of the (divisorial) valuation. A modification which is a resolution of each
divisorial valuation from a (finite) collection $\{\nu_i\}$ is called a {\it resolution of
the collection\/}.

Two collections $\{\nu_i\}$ and $\{\nu'_i\}$ of divisorial valuations are called {\it topologically
equivalent\/} if they have isomorphic minimal resolution graphs $\Gamma$ and $\Gamma'$, i.~e.,  if there exists an isomorphism between the abstract graphs $\Gamma$ and $\Gamma'$
preserving the ages
and sending the vertices corresponding to the valuations $\nu_i$ to the vertices corresponding to
the valuations $\nu'_i$.

Assume that the graph $\Gamma$ of a modification carries an action of a finite group $G$ preserving the
ages of the vertices. Let $\nu_i$, $i=1, \ldots, r$, be divisorial valuations corresponding to some
vertices of $\Gamma$, and let $\nu_{ai}:=a^*\nu_i$, $a\in G$, be the divisorial
valuation defined by the
$a$-shift of the corresponding vertex. An important example of this situation
(treated in \cite{DM})
is the case when the group $G$ acts (analytically) on $(\C^2,0)$ and the modification
$\pi$ is
$G$-invariant. In fact, in the constructions below, the structure of the group $G$ is not really
important. We use
only the order $h_0$ of the group $G$ (this order is assumed to be known) and the orders of
its subgroups.

Let $\check{\Gamma}$ be the quotient $\Gamma/G$ of the modification graph $\Gamma$ by the group action.
It is a graph of a modification. (One can say that the modification $\pi$ above is ``an equivariant
extension'' of this one.) To avoid some difficulties (and/or ambiguities) in the descriptions and in the
notations below, we shall usually assume that the graph $\check{\Gamma}=\Gamma/G$ is embedded into
the graph $\Gamma$ (as a ``section'' of the quotient map). This can be made in many ways, but we
shall assume that one embedding is fixed. This permits to assume that all the vertices corresponding
to the valuations $\nu_i$ lie in $\check{\Gamma}$.
As above, for a vertex $\delta\in \Gamma$, let $\varphi_{\delta}=0$,
$\varphi_{\delta}\in \OO_{\C^2,0}$, be an equation of a curvette at the component $E_{\delta}$,
$m_{\delta i}:=\nu_i(\varphi_{\delta})$.
Let $M_{\delta i} := \sum_{a\in G} m_{(a\delta) i} = \sum_{a\in G}(a^*
\nu_i)(\varphi_{\delta})$
and $\underline{M}_{\delta}= (M_{\delta 1}, \ldots, M_{\delta r})\in \Z_{\ge 0}^r$.
The ``multiplicities" $\underline{M}_{\delta}$ are the same for the vertices from one
orbit. Therefore they depend only on the corresponding vertex in
the quotient graph $\check \Gamma$.
All the multiplicities $\MM_\sigma$, $\sigma\in \check \Gamma$, are different and for
$\sigma,
\tau\in \Gamma$ one has
$\MM_\sigma=\MM_\tau$ if and only if $\tau = a \sigma$ for some $a\in G$.

Let $P(\{t_{ai}\})$ be the Poincar\'e series of the collection $\{\nu_{ai}\}$ of $r\vert G\vert$
valuations and let
$$
\zeta(t_1,\ldots, t_r):=P(t_1,\ldots, t_1,t_2,\ldots, t_2, \ldots, t_r,\ldots, t_r)
$$
with $\vert G\vert$ identical variables in each group.
Assume that either the number of edges at the initial vertex of the (minimal) modification graph
(that is of the only vertex with the age 1) is different from 2, or it is equal to 2, but these two
edges are not interchanged by the group action.

Let us show an example when this condition is not satisfied and one cannot determine the topological
type of a set of divisorial valuations from the corresponding series $\zeta(\cdot)$.
Let us consider
two modification graphs shown on Figure~\ref{fig1} with the obvious (non-trivial)
actions of the group
$\Z_2$ with 2 elements. The numbers at the vertices are the ages, the divisors under consideration
are marked by the circles.
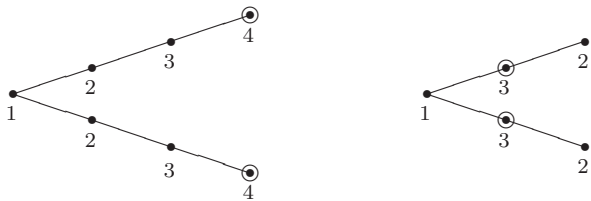
\begin{figure}[h]
$$
\unitlength=0.50mm
\begin{picture}(120.00,60.00)(0,0)
\thinlines
\put(-30,30){\line(3,1){63}}
\put(-30,30){\line(3,-1){63}}
\put(-30,30){\circle*{2}}
\put(-32,23){{\scriptsize$1$}}
\put(-9,37){\circle*{2}}
\put(-11,30){{\scriptsize$2$}}
\put(12,44){\circle*{2}}
\put(10,37){{\scriptsize$3$}}
\put(33,51){\circle*{2}}
\put(33,51){\circle{4}}
\put(31,44){{\scriptsize$4$}}
\put(-9,23){\circle*{2}}
\put(-11,16){{\scriptsize$2$}}
\put(12,16){\circle*{2}}
\put(10,8){{\scriptsize$3$}}
\put(33,9){\circle*{2}}
\put(33,9){\circle{4}}
\put(31,2){{\scriptsize$4$}}

\put(80,30){\line(3,1){42}}
\put(80,30){\line(3,-1){42}}
\put(80,30){\circle*{2}}
\put(78,23){{\scriptsize$1$}}
\put(101,37){\circle*{2}}
\put(101,37){\circle{4}}
\put(99,30){{\scriptsize$3$}}
\put(122,44){\circle*{2}}
\put(120,37){{\scriptsize$2$}}
\put(101,23){\circle*{2}}
\put(101,23){\circle{4}}
\put(99,16){{\scriptsize$3$}}
\put(122,16){\circle*{2}}
\put(120,9){{\scriptsize$2$}}
\end{picture}
$$
\caption{The modification graphs defining the divisorial valuations.}
\label{fig1}
\end{figure}
In the both cases one has $\zeta(t)=(1-t^5)^{-2}$ (see equation~(\ref{zeta}) below). Thus in these
cases one cannot determine the topological type of the set of valuations from the series $\zeta(t)$.

The main feature of this example is the fact that, for the initial vertex $\sigma_0$ of the modification graph (the only vertex with the age equal to 1), the Euler characteristic $\chi(\oE_{\sigma_0})$ is equal to zero. Therefore, in the A'Campo type formula, the binomial $(1-t^{M_{\sigma_0}})$ is absent (it is with the zero exponent) and one cannot determine the multiplicity $M_{\sigma_0}$ from the series $\zeta(t)$.
This problem does not appear in the considerations in \cite{DM} since the equivariant Euler characteristic of $\oE_{\sigma_0}$ (with values in the Burnside ring of the group $G$) is equal to
$2 [G/G]-[G/H]$ where $H$ is a subgroup of $G$ of index 2.

\begin{theorem}\label{theo1}
 In the described situation the series $\zeta(t_1,\ldots, t_r)$ determines the topological
 type of the collection $\{\nu_{ai}\}$ of divisorial valuations.
\end{theorem}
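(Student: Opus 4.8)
The plan is to turn the statement into a purely combinatorial reconstruction of the quotient graph $\check\Gamma$ out of an A'Campo type factorization of $\zeta$, and then to carry out that reconstruction. First I would apply the A'Campo formula~(\ref{ACampo}) to the collection $\{\nu_{ai}\}$ of $r\abs{G}$ valuations and then substitute $t_{ai}=t_i$. Since, by the chosen convention, $(a^*\nu_i)(\varphi_\delta)=m_{(a\delta)i}$, the exponent of $t_i$ produced by a vertex $\delta$ collapses after the substitution to $\sum_{a}m_{(a\delta)i}=M_{\delta i}$, so that the factor attached to $\delta$ becomes $(1-\tt^{\MM_\delta})^{-\chi(\oE_\delta)}$. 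Grouping the vertices of $\Gamma$ into $G$-orbits and using $\MM_\delta=\MM_{a\delta}$, I obtain
\begin{equation}\label{zeta}
\zeta(\tt)=\prod_{\sigma\in\check\Gamma}\bigl(1-\tt^{\MM_\sigma}\bigr)^{-\chi^G_\sigma},\qquad \chi^G_\sigma:=\sum_{\delta\in G\sigma}\chi(\oE_\delta)=\frac{\abs{G}}{\abs{G_\sigma}}\,(2-v_\sigma),
\end{equation}
where $G_\sigma$ is the stabilizer and $v_\sigma$ the (common) valence in $\Gamma$ of the vertices of the orbit. This reduces the theorem to recovering $\check\Gamma$ — with its ages, its markings, and its orbit sizes, which together with the known $\abs{G}$ and subgroup orders determine $\Gamma$ and hence the topological type — from the series~(\ref{zeta}).

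Second, I would read off the visible data. Because all the $\MM_\sigma$, $\sigma\in\check\Gamma$, are pairwise distinct, the representation~(\ref{zeta}) is the unique way to write $\zeta$ as a product of factors $(1-\tt^{\MM})^{-k}$ with distinct exponents $\MM$; passing to $\log\zeta=\sum_\sigma\chi^G_\sigma\sum_{k\ge1}\tt^{k\MM_\sigma}/k$ and performing a Möbius type inversion over divisibility of monomials, I recover exactly the set of pairs $(\MM_\sigma,\chi^G_\sigma)$ with $\chi^G_\sigma\neq0$. The orbits whose vertices have valence two in $\Gamma$ give $\chi^G_\sigma=0$ and stay invisible; they are the only thing to be reconstructed indirectly.

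Third comes the combinatorial core: rebuilding the marked tree $\check\Gamma$ from the visible pairs. The vertices with $\chi^G_\sigma>0$ are the orbits of the ends of $\Gamma$ (each of which is a marked vertex), those with $\chi^G_\sigma<0$ are the orbits of rupture vertices, and the invisible chains between them have to be filled in. Here I would use that the vector multiplicities $\MM_\sigma$ obey proximity type relations on the tree — affine growth along each chain and additivity at each rupture vertex — entirely parallel to the relations used in the non-equivariant reconstruction of a resolution graph from its Poincar\'e series (and to the argument of~\cite{Yamamoto} for links). These relations locate the rupture vertices, determine the chains joining them, and place the ends; a marked vertex of valence two is then pinned down through the multi-index structure, as the source from which the corresponding coordinate $t_i$ of the multiplicities radiates. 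Finally the orbit size of each $\sigma$ is obtained by comparing the read-off $\chi^G_\sigma$ with the geometric value $2-v_\sigma$ dictated by the reconstructed valence, using the known subgroup orders to convert quotient valences into the valences $v_\sigma$ in $\Gamma$ in the presence of ramification.

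The main obstacle is the initial vertex $\sigma_0$, the unique vertex of age $1$. Being $G$-fixed, it has orbit size one, so $\chi^G_{\sigma_0}=2-v_{\sigma_0}$. If $v_{\sigma_0}\neq2$ then $\chi^G_{\sigma_0}\neq0$ and $\sigma_0$ is visible, handled above. The delicate case is $v_{\sigma_0}=2$, where $\chi^G_{\sigma_0}=0$ and the multiplicity $M_{\sigma_0}$ does not appear in~(\ref{zeta}) at all. Under the hypothesis of the theorem the two edges at $\sigma_0$ are not interchanged by $G$, so $\sigma_0$ survives as a valence-two vertex of $\check\Gamma$ with two chains issuing from it; then $M_{\sigma_0}$ is forced by propagating the proximity relation inward along both chains simultaneously, the two-sided data determining a unique value. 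In the excluded symmetric situation of Figure~\ref{fig1} the two edges are swapped, $\sigma_0$ degenerates to an end of $\check\Gamma$ with a single emanating chain, and $M_{\sigma_0}$ is genuinely undetermined — which is precisely why $\zeta$ cannot distinguish the two graphs there. I expect this two-sided interpolation at the invisible root, together with the bookkeeping converting orbit Euler characteristics into valences when stabilizers are nontrivial, to be the part requiring the most care.
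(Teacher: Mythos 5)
Your first two steps do coincide with the paper's opening moves: collapsing the A'Campo formula (\ref{ACampo}) over $G$-orbits, and invoking the uniqueness of the decomposition into binomials to recover exactly the pairs $(\MM_\sigma,s(\MM_\sigma))$ with nonzero exponent. But your third step --- the reconstruction itself --- is where the entire content of the paper's proof lies, and your sketch of it both omits the needed mechanisms and asserts things that are false. The ends of $\Gamma$ are \emph{not} marked vertices: besides the valuations they comprise the dead ends $\sigma_0,\ldots,\sigma_g$ of each orbit graph, and conversely the marked vertex may fail to be an end ($\nu=\tau_g$), in which case its binomial cancels against the rupture-point binomial and is invisible; the paper can only decide between $p=g$ and $p=g+1$ at the very end, by whether $\gcd(m_{\sigma_0},m_{\sigma_1},\ldots,m_{\sigma_{p-1}})=1$. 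There is no ``affine growth'' relation along chains: what the paper actually derives and solves is a triangular system such as $M_{\sigma_{q(j)+k}}=h_j\,m_{\sigma_{q(j)+k}}+(M_{\rho_j}-h_j\,m_{\rho_j})N_{q(j)}\cdots N_{q(j)+k-1}$, entangling the unknown isotropy orders $h_j$ with the unknown Puiseux data $N_q$; and before one can even write it one must separate, among the positive-exponent binomials, the splitting points $\rho_j$ from the rupture points $\tau_q$ (the paper's test: $\tau_q$ is a splitting point iff $s(M_{\tau_q})>-s(M_{\sigma_q})$, after which $s(M_{\rho_j})=(h_0/h_j)-(h_0/h_{j-1})$ yields the ratios inductively along the geodesic). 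Your plan to get orbit sizes ``by comparing $\chi^G_\sigma$ with $2-v_\sigma$'' is circular: the valence in $\Gamma$ of a splitting point equals $1+h_{j-1}/h_j$, i.e.\ it is determined by the very isotropy ratios you are trying to extract, so exponents alone do not split into ``orbit size times $(2-v_\sigma)$'' until the sequential structure is known.

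Two ideas that carry the paper's proof are absent from yours. First, the projection formula $\zeta_{\nu}(t)=\zeta(t,1,\ldots,1)$ (valid for divisorial valuations with no correction factor, unlike the curve case) reduces everything to one orbit at a time; in particular your ``delicate case'' of two non-interchanged edges at $\sigma_0$ needs no two-sided interpolation at all, because in the minimal resolution graph of a single orbit $\{a^*\nu\}$ the vertex $\sigma_0$ has valence one unless $\rho_1=\sigma_0$, so after projection $M_{\sigma_0}$ reappears with exponent $-1$. The genuinely hard case is $\rho_1=\sigma_0$ with $h_0/h_1=2$, where the exponents cancel even in one variable; there the paper uses the hypothesis to conclude that $\Gamma$ has at least three edges at $\sigma_0$, so the binomial $(1-t_1^{M_{\sigma_0}}t_2^{k_2}\cdots t_r^{k_r})$ occurs in the multivariable decomposition with the \emph{minimal} multi-exponent. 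Your interpolation, by contrast, would have to propagate across chains of invisible valence-two vertices of unknown length, and you give no argument that this determines a unique value --- Figure~\ref{fig6} should make you cautious about how much invisible chains can hide. Second, you never explain how the $r$ per-valuation geodesics are assembled into one tree: the paper does this via separation points, read from $\zeta(t_1,t_2,1,\ldots,1)$ as the maximal exponent in (\ref{eqntwo}) with $s(M_1,M_2)\neq 0$ and $M_2/M_1=M_{\sigma_0 2}/M_{\sigma_0 1}$, followed by a computation of the multiplicities $m_{\delta'}$, $m_{\delta''}$ there. Without these devices your steps two and three remain a program rather than a proof.
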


\begin{proof}
 Let $\nu=\nu_i$ be one of the valuations under consideration. Without loss of generality we can assume
 that $i=1$. The series $\zeta_{\nu}(t)$ corresponding to this valuation is determined from
 $\zeta(t_1,\ldots, t_r)$ by the following ``projection formula'':
 $$
 \zeta_{\nu}(t)=\zeta(t,1,\ldots, 1).
 $$

Let us show that, under the described conditions, one can restore the
minimal resolution
graph of the collection $\{a^*\nu\}$, $a\in G$. We shall do this using essentially
the series
$\zeta_{\nu}(t)$.
However, in a certain situation we shall look back at $\zeta(t_1,\ldots, t_r)$. The dual graph
$\Gamma$ of the minimal resolution of the divisorial valuations $\{a^*\nu\}$ is shown
in Figure~\ref{fig2}.
The quotient $\Gamma/G$ of this graph by the action of the group $G$ is the minimal resolution graph
$\check{\Gamma}$ of the valuation $\nu$ shown in Figure~\ref{fig3}.
Here $\sigma_q$, $q=0,1,\ldots,g$, are called the {\it dead ends\/} of the graph, $\tau_q$,
$q=1,\ldots,g$, are called the {\it rupture points\/} and $g$ is the number of the Puiseux pairs
of a curvette corresponding to the divisor defining $\nu$.
The graph $\Gamma$ can be obtained
from the graph $\check{\Gamma}$ by the following construction. There are several vertices $\rho_1$, \dots,
$\rho_{\ell}$ in the graph $\check{\Gamma}$ lying on the geodesic from $\sigma_0$ (the only vertex with
the age 1) to $\nu$, $\rho_1<\rho_2< \ldots< \rho_{\ell}$, and some numbers
$h_0=\vert G\vert>h_1>\ldots>h_{\ell}$ such that $h_{i+1}\vert h_i$.
The vertices $\rho_j$ (we shall call them the {\em splitting points}) are the images under the
quotient map of the points in $\Gamma$ in whose neighbourhoods the quotient map is not an isomorphism.
The number $h_j$ is the order of the isotropy subgroup for the vertices inbetween $\rho_{j-1}$ and
$\rho_j$ ($\rho_{j-1}$ excluded and $\rho_j$ included).
(Not all sequences $\rho_1<\rho_2< \ldots< \rho_{\ell}$ are permitted.) To get the graph $\Gamma$ from
the graph $\check{\Gamma}$ one takes $\vert G\vert$ copies of the latter one. The parts of all of them
preceding $\rho_1$ ($\rho_1$ included) are identified. The remaining parts preceding $\rho_2$
($\rho_2$ included) are identified in groups containing $h_0/h_1$ copies each. The remaining parts
preceding $\rho_3$ ($\rho_3$ included) are identified in groups containing $h_0/h_2$ copies each, etc.
Notice that $h_0/h_{\ell}$ is the cardinality of the orbit of $\nu$.

\begin{figure}[h]
$$
\unitlength=0.50mm
\begin{picture}(120.00,110.00)(0,-30)
\thinlines
\put(-10,30){\line(1,0){70}}
\put(60,30){\circle*{2}}

\put(60,30){\line(1,1){40}}
\put(85,55){\line(1,-1){10}}
\put(85,55){\circle*{2}}
\put(100,70){\circle*{2}}

\put(100,70){\line(2,1){20}}
\put(120,80){\circle{4}}
\put(120,80){\circle*{2}}
\put(110,75){\line(1,-2){4}}
\put(100,70){\line(2,-1){20}}
\put(120,60){\circle{4}}
\put(120,60){\circle*{2}}
\put(110,65){\line(-1,-2){4}}

\put(60,30){\line(1,-1){40}}
\put(100,-10){\circle*{2}}
\put(85,5){\line(-1,-1){10}}
\put(85,5){\circle*{2}}
\put(86,6){{\scriptsize$\tau_4$}}
\put(75,-5){\circle*{2}}
\put(73,-13){{\scriptsize$\sigma_4$}}

\put(98,-5){{\scriptsize$\rho_2$}}
\put(100,-10){\line(2,1){20}}
\put(120,0){\circle{4}}
\put(120,0){\circle*{2}}
\put(110,-5){\line(1,-2){4}}
\put(100,-10){\line(2,-1){20}}
\put(120,-20){\circle{4}}
\put(120,-20){\circle*{2}}
\put(125,-21){{\scriptsize$\nu$}}
\put(110,-15){\line(-1,-2){4}}
\put(102,-17){{\scriptsize$\tau_5$}}
\put(110,-15){\circle*{2}}
\put(106,-23){\circle*{2}}
\put(104,-31){{\scriptsize$\sigma_5$}}

\put(60,30){\line(1,0){40}}
\put(85,30){\line(0,-1){13}}
\put(85,30){\circle*{2}}
\put(100,30){\circle*{2}}

\put(100,30){\line(2,1){20}}
\put(120,40){\circle{4}}
\put(120,40){\circle*{2}}
\put(110,35){\line(1,-2){4}}
\put(100,30){\line(2,-1){20}}
\put(120,20){\circle{4}}
\put(120,20){\circle*{2}}
\put(110,25){\line(-1,-2){4}}

\put(-10,30){\circle*{2}}
\put(5,30){\line(0,-1){15}}
\put(5,30){\circle*{2}}
\put(5,15){\circle*{2}}
\put(25,30){\line(0,-1){20}}
\put(25,30){\circle*{2}}
\put(25,10){\circle*{2}}
\put(60,30){\line(0,-1){15}}
\put(60,15){\circle*{2}}

\put(-25,24){{\scriptsize ${\bf 1}=\sigma_0$}}
\put(6.5,10){{\scriptsize$\sigma_1$}}
\put(26.5,7){{\scriptsize$\sigma_2$}}
\put(58,10){{\scriptsize$\sigma_3$}}
\put(4,33){{\scriptsize$\tau_1$}}
\put(24,33){{\scriptsize$\tau_2$}}
\put(40,33){\scriptsize{$\tau_3=\rho_1$}}
\end{picture}
$$
\caption{The resolution graph $\Gamma$ of the valuations $\{a^*\nu\}$.}
\label{fig2}
\end{figure}
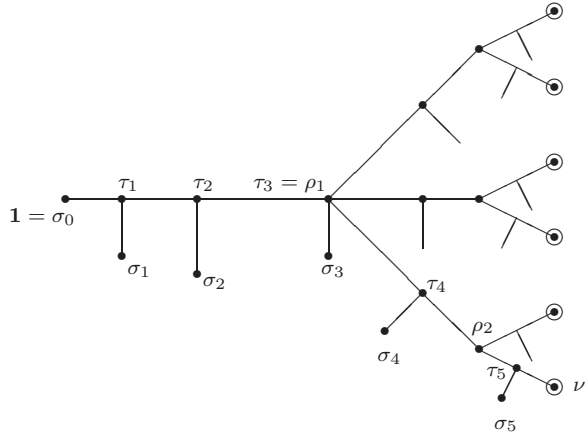

\begin{figure}
$$
\unitlength=0.50mm
\begin{picture}(120.00,40.00)(0,10)
\thinlines
\put(-10,30){\line(1,0){70}}
\put(60,30){\circle*{2}}

\put(82,33){{\scriptsize$\tau_4$}}
\put(82,12){{\scriptsize$\sigma_4$}}

\put(98,33){{\scriptsize$\rho_2$}}
\put(124,29){{\scriptsize$\nu$}}
\put(107,33){{\scriptsize$\tau_5$}}
\put(107,16){{\scriptsize$\sigma_5$}}

\put(60,30){\line(1,0){60}}
\put(85,30){\line(0,-1){13}}
\put(85,30){\circle*{2}}
\put(85,17){\circle*{2}}
\put(100,30){\circle*{2}}

\put(120,30){\circle{4}}
\put(120,30){\circle*{2}}
\put(110,30){\line(0,-1){8}}
\put(110,30){\circle*{2}}
\put(110,22){\circle*{2}}

\put(-10,30){\circle*{2}}
\put(5,30){\line(0,-1){15}}
\put(5,30){\circle*{2}}
\put(5,15){\circle*{2}}
\put(25,30){\line(0,-1){20}}
\put(25,30){\circle*{2}}
\put(25,10){\circle*{2}}
\put(60,30){\line(0,-1){15}}
\put(60,15){\circle*{2}}

\put(-25,24){{\scriptsize ${\bf 1}=\sigma_0$}}
\put(6.5,10){{\scriptsize$\sigma_1$}}
\put(26.5,7){{\scriptsize$\sigma_2$}}
\put(58,10){{\scriptsize$\sigma_3$}}
\put(4,33){{\scriptsize$\tau_1$}}
\put(24,33){{\scriptsize$\tau_2$}}
\put(50,33){\scriptsize{$\tau_3=\rho_1$}}
\end{picture}
$$
\caption{The resolution graph $\check \Gamma = \Gamma/G$ of the valuation
$\nu$.}
\label{fig3}
\end{figure}
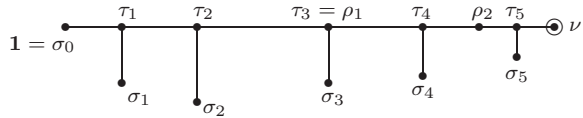

As above, we shall assume
that the graph $\check{\Gamma}=\Gamma/G$ is embedded into the graph $\Gamma$ (as a ``section'' of
the quotient map). (The number of embeddings is equal to $h_0/h_{\ell}$, we shall assume that one
embedding is fixed.) This embedding is shown in Figure~\ref{fig2} by indicating the
same names for some
vertices in $\Gamma$ as in $\check{\Gamma}$.

Also as above, for a vertex $\delta\in \check\Gamma$, let $\varphi_{\delta}=0$,
$\varphi_{\delta}\in \OO_{\C^2,0}$, be an equation of a curvette at the component $E_{\delta}$,
$m_{\delta}=\nu(\varphi_{\delta})$. (We use the same notations $m_{\bullet}$ as for the corresponding
multiplicities in the graph $\Gamma$, since for the vertices in the image of the graph $\check{\Gamma}$
they coincide.) It is known that the numbers $m_{\sigma_0}$, $m_{\sigma_1}$,\dots,
$m_{\sigma_g}$ form the minimal set of generators of the semigroup of values of the valuation $\nu$,
$m_{\tau_i}$ is a multiple of $m_{\sigma_i}$, $i=1, \ldots, g$: $m_{\tau_i}=N_i m_{\sigma_i}$,
and $m_{\sigma_0}=N_1\cdots N_g$.

For a vertex $\delta\in \Gamma$, let
$M_{\delta} := \sum_{a\in G} m_{(a\delta)} = \sum_{a\in G}(a^*\nu)(\varphi_{\delta})$.
The multiplicities ${M}_{\delta}$ are the same for the vertices from one
orbit. Therefore they depend only on the corresponding vertex in the quotient graph $\check \Gamma$.
All the multiplicities $M_\sigma$, $\sigma\in \check \Gamma$, are different and for $\sigma, \tau\in \Gamma$ one has
$M_\sigma=M_\tau$ if and only if $\tau = a \sigma$ for some $a\in G$.

%% The series $\zeta(t)$ is given by the equation
The A'Campo type formula for the Poincar\'e series of a collection of plane divisorial valuations (see~(\ref{ACampo}))
implies that the series $\zeta_{\nu}(t)$ is represented by the rational function given by the equation
\begin{eqnarray}
 \zeta_{\nu}(t)&=&\prod_{q=0}^g \left(1-t^{M_{\sigma_q}}\right)^{-h_0/h_{j(q)}}\cdot
 \prod_{q=1}^g \left(1-t^{M_{\tau_q}}\right)^{h_0/h_{j(q)}}\cdot\nonumber\\
 &\cdot&\prod_{j=1}^{\ell} \left(1-t^{M_{\rho_j}}\right)^{(h_0/h_j)-(h_0/h_{j-1})}\cdot
 \left(1-t^{M_{\nu}}\right)^{-h_0/h_{\ell}},\label{zeta}
\end{eqnarray}
where $h_{j(q)}$ is the order of the isotropy subgroup of the vertex $\sigma_{q}$,
 $M_{\tau_q}=N_q M_{\sigma_q}$.

Any series in $t$ with integer coefficients and with the initial term $1$ can be in a unique way
written as the product $\prod\limits_{m\ge 1} \left(1-t^m\right)^{s(m)}$ with $s(m)\in \Z$ (in general
an infinite one). The same statement holds for series in several variables.
The equation~(\ref{zeta}) implies that the product in the right hand side of the
equation
\begin{equation}\label{zeta_decomp}
 \zeta_{\nu}(t)=\prod\limits_{m\ge 1} \left(1-t^m\right)^{s(m)}
\end{equation}
is finite. In~(\ref{zeta}) some exponents of $t$ in the binomials may coincide.
This happens if and only if the corresponding vertices coincide. Namely, $\rho_1$ may coincide
with $\sigma_0$, each $\rho_i$, $i\ge 1$, may coincide with a certain $\tau_q$, and, finally, $\nu$
may coincide with $\tau_g$. In the first and in the latter cases this can lead to the situation
when the corresponding binomial ``is not seen'' in the decomposition~(\ref{zeta_decomp}) because
the corresponding exponents cancel. Moreover, if $\rho_1=\sigma_0$, the corresponding factors
cancel if and only if $h_0/h_1=2$.

Let us take the minimal $m$ such that the binomial $(1-t^m)$ appears in the
decomposition~(\ref{zeta_decomp}) (i.\ e., $s(m)\ne 0$). If $s(m)=-1$, one has $m=M_{\sigma_0}$ (and
$\rho_1\neq \sigma_0$).
If $s(m)<-1$, then $s(m)=-2$, $\rho_1=\sigma_0$, $h_0/h_1=2$, and either $m=M_{\sigma_1}$, or
$g=0$, $\ell=1$ and $m=M_{\nu}$. (The latter option is a degenerate one and is
analogous to the one shown on the left hand side of Figure~\ref{fig1}.)
If $s(m)>0$, one has $\rho_1=\sigma_0$. There are two options.
Either $h_0/h_1=2$ and the binomial $\left(1-t^{M_{\sigma_0}}\right)$ does not appear in the
decomposition~(\ref{zeta_decomp}), or $h_0/h_1>2$, $m=M_{\sigma_0}$, $s(m)=(h_0/h_1)-2$.

If $\rho_1=\sigma_0$ and the binomial $\left(1-t^{M_{\sigma_0}}\right)$ does not appear in the
decomposition~(\ref{zeta_decomp}), the binomial of the form
$\left(1-t_1^{M_{\sigma_0}}t_2^{k_2}\cdots t_r^{k_r}\right)$ is present in the corresponding
decomposition of $\zeta(t_1, t_2, \ldots, t_r)$ (due to the conditions imposed on the
resolution graph before Theorem~\ref{theo1}: in this case there are more than two
edges
of the graph $\Gamma$ at the vertex $\sigma_0$) and the corresponding multi-exponent
$(M_{\sigma_0},k_2,\cdots, k_r)$ is the minimal one in the decomposition. Therefore
$M_{\sigma_0}$ can be determined in this case as well.

Now let us find all the exponents $M_{\sigma_i}$, $M_{\tau_i}$ for $i\ge 1$ (maybe except $M_{\tau_g}$),
$M_{\rho_j}$, $j=1,\ldots, \ell$, and the ratios $h_{j-1}/h_j$. Let us take all the binomials
$(1-t^m)$ with $m>{M_{\sigma_0}}$ in the decomposition~(\ref{zeta_decomp}) with
negative exponents $s(m)$.
All of them except possibly the biggest one are the multiplicities $M_{\sigma_i}$. (The biggest one
may coincide with $M_{\nu}$.)
Let $M_{\sigma_1}<M_{\sigma_2}<\ldots<M_{\sigma_p}$ be these exponents in the increasing order.
One has either $p=g$ (in this case $\nu=\tau_g$) or $p=g+1$ (and $\sigma_p=\nu$).
Moreover, one has
$s(M_{\sigma_p})=h_0/h_{\ell}$ (the number of different valuations in the orbit
$\{a^*\nu\}$ of the valuation $\nu$).

Let us take the exponents $m$ in (\ref{zeta_decomp}) such that $M_{\sigma_0}<m<M_{\sigma_1}$.
All these exponents correspond to the splitting points (up to $\rho_{j(1)}$). This gives the values
$M_{\rho_j}$ for these $j$. Moreover, one has $s(M_{\rho_j})=(h_0/h_j)-(h_0/h_{j-1})$. This gives all
the ratios $h_0/h_j$ for all $j\le j(1)$.

Assume that we have detected all $M_{\rho_j}$ ($j\le j(q)$) and $M_{\tau_i}$ ($i<q$) smaller
than $M_{\sigma_q}$ and all the ratios $h_0/h_j$ for $j\le j(q)$. Let us take all the exponents
$m$ in (\ref{zeta_decomp}) such that $M_{\sigma_q}<m<M_{\sigma_{q+1}}$ with non-zero (and thus positive)
$s(m)$. The smallest among them is $M_{\tau_q}$. The vertex $\tau_q$ can either be the splitting point
$\rho_{j(q)+1}$ or not. The vertex $\tau_q$ is the splitting point $\rho_{j(q)+1}$ if and only if
$s(M_{\tau_q})>-s(M_{\sigma_q})$. In this case $s(M_{\tau_q})=2(h_0/h_{j(q)})-(h_0/h_{j(q)+1})$.
This equation gives $h_0/h_{j(q)+1}$. All the remaining exponents $m$ inbetween $M_{\tau_q}$ and
$M_{\sigma_{q+1}}$ (with $s(m)$ positive) correspond to the splitting points $\rho_j$ (with $j$ up
to $j(q+1)$). As above the ratio $h_0/h_j$ is determined by
$s(M_{\rho_j})=(h_0/h_j)-(h_0/h_{j-1})$.

For $1\le q<p$ one has $M_{\tau_q}$ is a multiple of $M_{\sigma_q}$ and moreover
$M_{\tau_q}/M_{\sigma_q}=N_q$.
Now we compute the multiplicities $m_{\sigma_q}$ for $0\le q\le p$ and $m_{\rho_j}$ for $1\le j\le \ell$
(and finally determine $m_\nu$).

For $\sigma_q\le\rho_1$ one has
$M_{\sigma_q}=h_0 m_{\sigma_q}$ and $M_{\rho_1}=h_0 m_{\rho_1}$. These equations
give all the generators $m_{\sigma_q}$ of the semigroup of values of the valuation
with $\sigma_q\le\rho_1$ and also $m_{\rho_1}$.

For $j\ge 1$, let $\sigma_{q(j)}$ be the minimal dead end greater than $\rho_{j}$
(i.e. there are the dead ends $\sigma_{q(j)}$, \dots, $\sigma_{q(j+1)-1}$
inbetween
$\rho_{j}$ and $\rho_{j+1}$). Let us consider the dead ends $\sigma_q$ such that
$\rho_1<\sigma_q<\rho_2$. One has
$$
M_{\sigma_{q(1)}}= h_1 m_{\sigma_{q(1)}} + ( h_0 -
h_1)m_{\rho_1}
=
 h_1  m_{\sigma_{q(1)}} + (M_{\rho_{1}} -  h_{1}  m_{\rho_{1}})\,.
$$
For
$\rho_1 < \sigma_{q(1)} < \sigma_{q(1)+1} < \sigma_{q(1)+2} < \cdots <
\sigma_{q(2)-1} < \rho_2$, one has
\begin{eqnarray*}
M_{\sigma_{q(1)+1}}&=& h_1 m_{\sigma_{q(1)+1}} +
(M_{\rho_{1}} - h_{1} m_{\rho_{1}})N_{q(1)}\,,\\
M_{\sigma_{q(1)+2}}&=&
h_1 m_{\sigma_{q(1)+2}} +
(M_{\rho_{1}} -  h_{1} m_{\rho_{1}})N_{q(1)}N_{q(1)+1}\,,\\
{\ }&{\ }&\ldots\\
M_{\sigma_{q(2)-1}}&=&
h_1 m_{\sigma_{q(2)-1}} +
(M_{\rho_{1}} - h_{1}  m_{\rho_{1}})N_{q(1)}N_{q(1)+1}\cdot\ldots\cdot
N_{q(2)-2}\\
M_{\rho_2}&=&
h_1  m_{\rho_2} +
(M_{\rho_{1}} -  h_{1}  m_{\rho_{1}})N_{q(1)}N_{q(1)+1}\cdot\ldots\cdot
N_{q(2)-1}\,.
\end{eqnarray*}
These equations
give us the numbers $m_{\sigma_q}$ with $\sigma_q < \rho_2$ and also
$m_{\rho_2}$.

Assume that we have determined all the numbers $m_{\sigma_{q}}$ for
$q<q(j)$ and also the number
$m_{\rho_{j}}$. Let us consider the dead ends $\sigma_q$ such that
$\rho_{j}<\sigma_q<\rho_{j+1}$. One has
\begin{eqnarray*}
M_{\sigma_{q(j)}}&=&  h_{j}  m_{\sigma_{q(j)}} +
(M_{\rho_{j}} -   h_{j}  m_{\rho_{j}})\,,\\
M_{\sigma_{q(j)+1}}&=&  h_{j}  m_{\sigma_{q(j)+1}} +
(M_{\rho_{j}} -   h_{j}  m_{\rho_{j}})N_{q(j)}\,,\\
M_{\sigma_{q(j)+2}}&=&  h_{j}  m_{\sigma_{q(j)+2}} +
(M_{\rho_{j}} -   h_{j}  m_{\rho_{j}})N_{q(j)}N_{q(j)+1}\,,\\
{\ }&{\ }&\ldots\\
M_{\sigma_{q(j+1)-1}}&=&  h_{j}  m_{\sigma_{q(j+1)-1}} +
(M_{\rho_{j}} -   h_{j}  m_{\rho_{j}})N_{q(j)}N_{q(j)+1}\cdots N_{q(j+1)-2}\,,\\
M_{\rho_{j+1}}&=&
  h_{j}  m_{\rho_{j}} +
(M_{\rho_{j}} -   h_{j}  m_{\rho_{j}})N_{q(j)}N_{q(j)+1}\cdot\ldots\cdot N_{q(j+1)-1}\,.
\end{eqnarray*}
These equations
give all the numbers $m_{\sigma_{q}}$ with
$q<q(j+1)$ and also
$m_{\rho_{j+1}}$.

This procedure gives us the numbers $m_{\sigma_q}$ for all $q\le p$. If
$\gcd(m_{\sigma_0}, m_{\sigma_1}, \ldots, m_{\sigma_{p-1}}) = 1$ then $p=g+1$,
$\sigma_p=\nu$. Otherwise $p=g$, $\nu = \tau_g$. In this way one determines all the
numbers $m_{\sigma_q}$ for $0\le q \le g$ (i.e. the generators of the semigroup of
values of the valuation $\nu$);  $m_{\rho_j}$ and $h_j$ for $1\le j\le \ell$ and
$m_{\nu}$.
This gives us the minimal resolution graph of the set of valuations $\{a^*\nu | a\in
G\}$.

To determine the (minimal) resolution graph of the collection of valuations
$\{a^*\nu_i | i=1\ldots, r; a\in G\}$ one has to determine the separation point
$\delta_{ij}$ between each two valuations $\nu_i$ and $\nu_j$ in the graph
$\check \Gamma = \Gamma/G$. For convenience let us assume that $i=1$ and $j=2$. Let
\begin{equation}\label{eqntwo}
\zeta(t_1,t_2,1\ldots,1) = \prod (1- t_1^{M_1}t_2^{M_2})^{s(M_1,M_2)} ,
\end{equation}
$s(M_1,M_2)\in \Z$, be the decomposition of the series $\zeta(t_1,t_2,1,\ldots,1)$
into the product of binomials.
The separation point $\delta_{12}$ corresponds to the maximal exponent present in the
decomposition (\ref{eqntwo}) (i.~e., such that $s(M_1,M_2)\neq 0$) with
$$
\frac{M_2}{M_1} = \frac{M_{\sigma_0 2}}{M_{\sigma_0 1}} .
$$
(If there is no such $(M_1,M_2)$, one has $\delta_{12} = \sigma_0$.)
To reduce the situation to the standard statements about topology of curves one has
to determine the multiplicities $m_1$ and $m_2$ of the separation point in the graph
$\check \Gamma = \Gamma/G$. Let $\delta'$ (respectively $\delta''$) be the vertex of
the minimal resolution graph $\Gamma_1$ (respectively $\Gamma_2$) of the valuation
$\nu_1$ (respectively $\nu_2$) such that $M_{\delta'}=M_1$ in $\Gamma_1$
($M_{\delta''}=M_2$ in $\Gamma_2$). Such vertex $\delta'$ (respectively $\delta''$)
either does not exist or is a unique one. Moreover, either $\delta'$, or $\delta''$
(or both) are present in the graphs $\Gamma_1$ and $\Gamma_2$ respectively. If
exactly one of $\delta'$ and $\delta''$ exists, without loss of generality we can
assume that this is $\delta'$. If both of them exist, we can assume (again
without loss of generality) that the age of $\delta'$ is smaller or equal to the age
of $\delta''$. One can see that $\delta'$ is the separation point in the resolution
graph $\Gamma_{\{ 1 2\}}$ of the pair $\{\nu_1, \nu_2\}$. The corresponding
multiplicity $m_{\delta'}$ in $\Gamma_1/G$ can be found in the same way as
$m_{\sigma_i}$, $m_{\tau_i}$ and $m_{\rho_j}$ above. The multiplicity $m_{\delta''}$
of the separation point in the graph $\Gamma_{\{1 2\}}$ corresponding to the
valuation $\nu_2$ is determined from the equality
${m_{\delta''}}/{m_{\delta'}} = {M_2}/{M_1}$.
\end{proof}

%%%%%%%%%%%%%%%%%%%%%%%%%%%%%%%%%%%%%%%%%%%%%%%%%%
\section{Topology of curve valuations}\label{sec3}
%%%%%%%%%%%%%%%%%%%%%%%%%%%%%%%%%%%%%

Here we shall discuss collections of curve valuations on $\OO_{\C^2,0}$. Let
$(C_i,0)\subset (\C^2,0)$, $i=1,\ldots, s$, be irreducible plane curve singularities
and let $\nu_i$ be the (curve) valuation on $\OO_{\C^2,0}$ defined by the branch
$C_i$. Let $\pi: (X,{\cal D})\to (\C^2,0)$ be an embedded resolution of the curve
$C= \bigcup\limits_{i=1}^s C_i$. The exceptional divisor ${\cal D}$ is the union of
its irreducible components $E_{\sigma}$. For $i=1,\ldots,s$, let $E_{\alpha_i}$ be the
component of ${\cal D}$ intersecting the strict transform of the branch $C_i$.
The dual graph $\Gamma$ of the resolution $\pi$ is the graph whose vertices
correspond to the components $E_{\sigma}$ of the exceptional divisor ${\cal D}$ and
to the components $C_i$ of the curve $C$, the latter ones are depicted by arrows.
Two vertices are connected by an edge if and only if the corresponding components
intersect.
Pay attention that several arrows may be connected with one and the same vertex.
Each vertex $\sigma$ (corresponding to the divisor $E_{\sigma}$) has its age.

As in Section~\ref{sec2},  let us assume that the graph $\Gamma$ carries an action of
a finite group $G$ preserving the ages of the vertices. In particular, this means
that the group acts on the set of arrows, that is on the components of $C_i$ of the
curve for $i=1,\ldots, s$. Assume that the components $\seq{C}1r$ ($r<s$) are
representatives of all the orbits of the $G$-action on the curve components. The
component obtained from $C_i$ by the $a$-shift ($a\in G$) will be denoted by $a C_i$
or by
$C_{a i}$ and the corresponding curve valuation will be denoted by $a^*\nu_i$ or
by $\nu_{a i}$.

Let $P(\{t_{a i}\})$ be the Poincar\'e series of the collection $\{\nu_{a i}\}$ of
the $r\abs{G}$ curve valuations (defined by the components of the curve $C$) and let
$$
\zeta (\seq t1r) := P(\seq t11, \seq t22, \ldots, \seq trr)
$$
with $\abs{G}$ identical variables in each group.
Pay attention that we do not
impose additional conditions like in the divisorial case in Section~\ref{sec2}.

\begin{theorem}\label{theo2}
The series $\zeta(\seq t1r)$ determines the topological
type of the curve $C = \bigcup\limits_{a\in G} \bigcup\limits_{i=1}^r C_{a i}$.
\end{theorem}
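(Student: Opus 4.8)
The plan is to reproduce the argument for Theorem~\ref{theo1} almost verbatim, the only genuine changes being the treatment of the vertex carrying the arrow and, in exchange, the disappearance of the extra hypothesis on the root. First I would isolate one orbit at a time. For each $i$, setting $t_j=1$ for $j\neq i$ gives, by the same projection formula as before,
$$
\zeta_{\nu_i}(t)=\zeta(1,\ldots,1,t,1,\ldots,1)
$$
(with $t$ in the $i$-th slot), which is the one-variable Alexander polynomial of the sublink $\bigcup_{a\in G}C_{ai}$. By~(\ref{ACampo}) and the formula of~\cite{EN} this series is a finite product $\prod_m(1-t^m)^{s(m)}$, and I would run the inductive reading of its binomial decomposition exactly as in the proof of Theorem~\ref{theo1}: from the negative exponents I recover the dead-end multiplicities $M_{\sigma_q}$ (hence, via $M_{\tau_q}=N_qM_{\sigma_q}$, the numbers $N_q$), from the intermediate positive exponents the splitting points $\rho_j$, and from the equalities $s(M_{\rho_j})=(h_0/h_j)-(h_0/h_{j-1})$ the isotropy orders $h_j$ and the orbit cardinality $h_0/h_\ell$.

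The single structural novelty is that the strict transform of $C_i$ is now an arrow rather than a divisor, so the vertex $\alpha_i$ carrying the arrow(s) has $\chi(\oE_{\alpha_i})=2-(\#\text{edges}+\#\text{arrows})$; a valence-$2$ terminal vertex then contributes the trivial factor, and only the rupture points and the arrow-free dead ends $\sigma_q$ survive with non-zero exponent, as in the Eisenbud--Neumann product. This is harmless: the generators $m_{\sigma_0},\ldots,m_{\sigma_g}$ of the semigroup of $\nu_i$ still sit at the dead ends and are recovered by the very recursions displayed in the proof of Theorem~\ref{theo1} (together with $m_{\tau_q}=N_qm_{\sigma_q}$ and $m_{\sigma_0}=N_1\cdots N_g$), so the topological type of the branch $C_i$, and hence of its whole orbit, is reconstructed.

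The reason no analogue of the condition preceding Theorem~\ref{theo1} is needed---and the point I expect to require the most care---is that for curve valuations nothing has to be read off the root vertex directly. In the divisorial case the obstruction of Figure~\ref{fig1} was that a \emph{marked} divisor could sit at a symmetric valence-$2$ root with $\chi(\oE_{\sigma_0})=0$, so that $M_{\sigma_0}$ disappeared from $\zeta_\nu(t)$ and two different marked configurations became indistinguishable. Here no divisor is marked: the invariant to be recovered is the embedded topological type of $C$, encoded in the semigroups of the branches and their pairwise intersection multiplicities, and the numbers $M_{\sigma_0 i}=h_0\,m_{\sigma_0 i}=h_0\,N_1\cdots N_{g_i}$ are \emph{computed} from the Puiseux data already found rather than read from a leading binomial. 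It then remains to dispose of the genuinely degenerate situation in which $\sigma_0$ has valence~$2$ and carries no visible structure. Since we reconstruct the \emph{minimal} resolution, a valence-$2$ root with no rupture point on either side forces each arm to be a single smooth branch meeting $E_{\sigma_0}$ transversally; thus $\zeta_{\nu_i}(t)\equiv 1$ can only come from an orbit of two mutually transversal smooth branches, a topologically rigid configuration, and no ambiguity survives.

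Finally, to glue the orbits together I would recover, for each pair $(i,j)$, the separation point of $\nu_i$ and $\nu_j$ in $\check\Gamma=\Gamma/G$ from the decomposition~(\ref{eqntwo}) of $\zeta(t_i,t_j,1,\ldots,1)$, exactly as in the last part of the proof of Theorem~\ref{theo1}: it is the maximal visible multi-exponent $(M_1,M_2)$ with $M_2/M_1=M_{\sigma_0 j}/M_{\sigma_0 i}$, with the convention that its absence means separation at $\sigma_0$; the two multiplicities at that point are obtained by the same semigroup computation together with $m_{\delta''}/m_{\delta'}=M_2/M_1$. Assembling the orbit graphs, now carrying arrows, along these separation points produces the full embedded resolution graph of $C$, and by the classical correspondence between this graph and the embedded topological type (cf.~\cite{EN,Yamamoto}) the series $\zeta(\seq t1r)$ determines the topological type of $C$.
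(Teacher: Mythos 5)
There is a genuine gap, and it sits at the very first step of your plan: the projection formula you use is false for curve valuations. For divisorial valuations one indeed has $\zeta_{\nu}(t)=\zeta(t,1,\ldots,1)$, but for curve valuations specializing a variable to $1$ changes the resolution data (the strict transform of the suppressed branch is an arrow, whose removal changes the Euler characteristics $\chi(\oE_{\sigma})$ at the attaching vertices; this is a Torres-type phenomenon for Alexander polynomials). The correct statement, which is the pivot of the paper's argument (its equations~(\ref{projection}) and~(\ref{projection2})), is
$$
\zeta_{\{\nu_i\}}(\tt)_{|_{t_i=1,\ i\neq i_0}}
=\zeta_{\nu_{i_0}}(t_{i_0})\cdot\prod_{i\neq i_0}\bigl(1-t_{i_0}^{M_{\alpha_i i_0}}\bigr)^{\abs{G}/k_i}\, ,
$$
so $\zeta(1,\ldots,1,t,1,\ldots,1)$ is \emph{not} the one-variable Alexander polynomial of the sublink $\bigcup_{a\in G}C_{ai}$: it is contaminated by binomials with positive exponents $\abs{G}/k_i$ whose exponents $M_{\alpha_i i_0}$ can interlace with, or cancel against, the $M_{\sigma_q}$, $M_{\tau_q}$, $M_{\rho_j}$ of the orbit of $\nu_{i_0}$. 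Running ``the inductive reading of the binomial decomposition exactly as in Theorem~\ref{theo1}'' on this series can therefore misidentify dead ends, splitting points and the isotropy ratios $h_0/h_j$, and your orbit-by-orbit reduction does not get off the ground for $r>1$.

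What is missing is precisely the bulk of the paper's proof: before one can isolate an orbit, one must locate in the full multivariable factorization a factor $(1-\tt^{\MM_{\alpha_{i_0}}})^{\abs{G}/k_{i_0}}$ for some $i_0$, and since its exponent is $-\chi(\oE_{\alpha_{i_0}})\,\abs{G}/k_{i_0}$ one must separately determine $\chi(\oE_{\alpha_{i_0}})$. The paper does this by taking a maximal multi-exponent $\MM_{\sigma}$ in the factorization, introducing the index sets $B(\sigma)\subset A(\sigma)$ via the monotonicity of the normalized multiplicities $\frac{1}{M_{\delta i}}\MM_{\delta}$, and working through a substantial case analysis of degenerate configurations (several smooth branches splitting at one vertex, the two root edges interchanged by $G$, the two-arm configuration of Figure~\ref{fignnn}), each recognized by an explicit closed form of $\zeta(\tt)$; only then does induction on $r$, peeling off one orbit at a time with~(\ref{projection}), become available. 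None of this machinery, nor any substitute for it, appears in your proposal. By contrast, the two endpoints of your plan do match the paper: for $r=1$ the binomial $(1-t^{M_{\tau_g}})$ is always present with positive exponent, so $M_{\sigma_0}=h_0N_1\cdots N_g$ is \emph{computed} rather than read off a leading binomial, which is exactly why the root hypothesis of Theorem~\ref{theo1} disappears; and the separation-point step is, as you say, literally the divisorial one. The middle step --- the only genuinely new difficulty of the curve case --- is the gap.
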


\begin{proof}
As in the proof of Theorem~\ref{theo1},  we have to show that the series
$\zeta(\seq t1r)$ determines the minimal resolution graph $\Gamma$ of the curve $C$.
The graph $\Gamma$ looks essentially like the resolution graph of the divisorial
valuations corresponding to the vertices
$a \alpha_i$ ($i=1,\ldots, r$, $a\in G$) (see  Figure~\ref{fig2} for $r=1$) with
several
(possibly one)
arrows attached to each vertex $a \alpha_i$. Let $\check{\Gamma} = \Gamma/G$ be the
quotient of the graph $\Gamma$ by the $G$-action (like in Figure~\ref{fig3} for
$r=1$); some
arrows have to be added. In general (if among the arrows attached to one vertex
$a \alpha_i$ one has different representatives  of one $G$-orbit), the graph $\check \Gamma$
is not the minimal resolution graph of a curve, but may be somewhat enlarged (see
Figure~\ref{fig4}; in the minimal resolution graph the arrow $C_i$ is attached to
the vertex $\tau$ and the ``tail" between $\tau$ and $\rho$ does not exist).

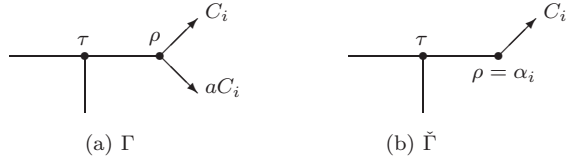
\begin{figure}[h]
$$
\unitlength=0.50mm
\begin{picture}(80.00,40.00)(50,10)
\thinlines

\put(20,30){\line(1,0){20}}
\put(40,30){\circle*{2}}
\put(38,33){\scriptsize{$\tau$}}
\put(40,30){\line(0,-1){15}}
\put(40,30){\line(1,0){20}}
\put(60,30){\circle*{2}}
\put(57,34){{\scriptsize$\rho$}}
\put(60,30){\vector(1,1){10}}
\put(60,30){\vector(1,-1){10}}
\put(72,40){{\scriptsize $C_i$}}
\put(72,20){{\scriptsize $aC_i$}}
\put(40,5){{\scriptsize (a) $\Gamma$}}

\put(110,30){\line(1,0){20}}
\put(130,30){\circle*{2}}
\put(128,33){\scriptsize{$\tau$}}
\put(130,30){\line(0,-1){15}}
\put(130,30){\line(1,0){20}}
\put(150,30){\circle*{2}}
\put(143,24){{\scriptsize$\rho=\alpha_i$}}
\put(150,30){\vector(1,1){10}}
\put(162,40){{\scriptsize $C_i$}}
\put(120,5){{\scriptsize (b) $\check \Gamma$}}

\end{picture}
$$
\caption{Parts of the graphs $\Gamma$ and $\check \Gamma = \Gamma/G$.
}
\label{fig4}
\end{figure}

As above we assume the graph $\check \Gamma$ to be embedded into the graph $\Gamma$.
(This assumption determines, in particular, the choice of representatives $C_i$, $i=1,\ldots, r$, from
the $G$-orbits by $C_j\in \check \Gamma$.)

Let $m_{\delta_i}$, $M_{\delta_i}$ and $\underline{M}_{\delta}$ be defined as in
Section~\ref{sec2}. Let $k_i$ be the order of the isotropy group of the branch $C_i$,
$i=1,\ldots,r$. One can see that, for
$i, j\in
\{1,\ldots
,r\}$,
$m_{\alpha_i j}$ is just the intersection multiplicity between the curves $C_i$ and
$C_j$ and
$$
M_{\alpha_i j} = \sum_{a\in G} m_{(a \alpha_i) j} = \sum_{a\in G} (a^*\nu_j)(h_{\alpha_i}) =
(C_i, \bigcup_{a\in G} a C_j) =
(C_j, \bigcup_{a\in G} a C_i) =
M_{\alpha_j i}\; .
$$

In the case of curve valuations the ``projection formula" is different from that for
divisorial valuations. Namely,
for $i_0\in \{1,\ldots, r\}$ one has:
\begin{equation}\label{projection}
\zeta_{\{\nu_i\}}(\tt)_{|_{t_{i_0}=1}} =
(1-\tt^{\MM_{\alpha_{i_0}}})^{\abs{G}/k_{i_0}}_{|_{t_{i_0}=1}} \cdot
\zeta_{\{\nu_i\}_{i\ne i_0}}(t_1,\ldots,t_{i_0 -1}, t_{i_0 +1}, \ldots,t_r)\,.
\end{equation}
(This can be easily deduced from the A'Campo type formula for the
Poincar\'e series of a collection of curve valuations.) Using
(\ref{projection}) several times for the indices $i\neq i_0$,
one gets:
\begin{equation}\label{projection2}
\zeta_{\{\nu_i\}}(\tt)_{|_{t_i=1, i\neq i_0}} =
\zeta_{\nu_{i_0}}(t_{i_0}) \cdot
\prod_{i\neq i_0} (1- t_{i_0}^{M_{\alpha_i i_0}})^{\abs{G}/k_i} \;.
\end{equation}
Using the fact that $M_{\alpha_i i_0}=M_{\alpha_{i_0} i}$, one sees that all the
multiplicities $M_{\alpha_{i} i_0}$, $i\neq i_0$,  are contained in $\underline{M}_{\alpha_{i_0}}$ as the components of it.

Using the induction on the number $r$ of the curve orbits,
equations (\ref{projection}) and (\ref{projection2}) mean that in order to describe
the (minimal) resolution graph $\Gamma$ of the curve $C$, we have:

\noindent 1) To describe the minimal resolution graph of the curve
$\bigcup\limits_{a\in G} a C_{i_0}$ (i.~e., for $r=1$) through the zeta function
$\zeta(t) = \zeta_{\nu_{i_0}}(t)$ (i.e to adapt the corresponding description
for one divisorial valuation in the proof of Theorem~\ref{theo1} to the curves case).

\noindent 2) To detect the binomial $(1-\tt^{\underline{M}_{\alpha_{i_0}}})$ and the number
$\abs{G}/k_{i_0}$ corresponding to (at least) one index $i_0\in \{1,\ldots, r\}$.
Using (\ref{projection}) this permits to get the series zeta for the remaining
$r-1$ valuations $\{\nu_i\}_{i\ne i_0}$. The induction assumes that the minimal resolution graph
of the collection $\{\nu_i\}_{i\ne i_0}$ is determined by the series $\zeta(\ldots)$. This gives, in
particular, all the numbers $\abs{G}/k_{i}$ for $i\ne i_0$. Using (\ref{projection2}) one gets
the series zeta and thus the minimal resolution graph for the valuation $\nu_{i_0}$.

\noindent 3) To determine the separation point of the curves $C_{i_0}$ and $C_i$
($i\neq i_0$) in the minimal resolution graph of these two curves.

Let us start with the point 1), i.~e.
let us assume that $r=1$ and $\zeta(t) = P_{\{a \nu_1\}}(t,\ldots,t)$ with
$\abs{G}$ identical variables in the right hand side, $P_{\{a \nu_1\}}(\seq
t1{\abs{G}})$ is
the Poincar\'e series of the set $\{a \nu_1\}$ of curve valuations (it is possible
that
$a \nu_1 = a' \nu_1$ for $a\neq a'$).
We have  to show how the minimal resolution graph of the curve
$\bigcup\limits_{a\in G} a C_1$ can be determined from the zeta function $\zeta(t)$.

Mostly this procedure repeats the one for the divisorial case. There is only one
essential difference. In the divisorial case we had to assume that the multiplicity
$M_{\sigma_0}$ can be determined at the first step of the consideration. This lead to
the exceptions which had to be excluded. The knowledge of $M_{\sigma_0}$ permitted us
to determine $m_{\sigma_i}, m_{\tau_i}$ for $i\geq 1$, $m_{\rho_j}$, {\dots}
In particular this permitted us to determine $m_{\tau_g}$ (even if the corresponding
binomial was absent in the decomposition of $\zeta(t)$).
However, if, by a chance, one knows $M_{\tau_g}$ (in the divisorial case this means
that $\tau_g\neq \nu$), one can recover $M_{\sigma_0}$ from the values $M_{\sigma_i}$
and $M_{\tau_i}$ for $i=1,\ldots, g$.
This happens in the curve case since the binomial
$(1-t^{M_{\tau_g}})$ in the decomposition of $\zeta(t)$ is always present with a
positive exponent.

The procedure described in the proof of Theorem~\ref{theo1} permits us to determine
all $M_{\sigma_i}$, $M_{\tau_i}$ for $i=1,\ldots , g$ (including $M_{\tau_g}$ !) and
also $M _{\rho_j}$ and $h_0/h_j$ for all $j$ except possibly the first one if
$\rho_1=\sigma_0$ and
$h_0/h_1 =2$. We have $M_{\tau_i} = N_i M_{\sigma_i}$, $i=1,\ldots,g$. It is known
that $m_{\sigma_0} = N_1\cdots N_g$ and therefore
$M_{\sigma_0} = h_0 m_{\sigma_0} = h_0 N_1 \cdots N_g$.
When $M_{\sigma_0}$ is known, all the multiplicities $m_{\sigma_i}$, $m_{\tau_i}$ and
$m_{\rho_j}$ can be determined in the same way as in Section~\ref{sec2}. This gives
the minimal resolution graph for one curve valuation.

Let us move to the point 2), i.~e., detect the binomial
$(1-\tt^{\underline{M}_{\alpha_{i_0}}})$ corresponding to an index
$i_0\in\{1,\ldots,r\}$ and the number $\abs{G}/k_{i_0}$.

Let us assume $r>1$ and
let us fix $j,k\in \{1,\ldots,r\}$. The separation point
$s(\alpha_j,\alpha_k)\in
\Gamma$
of $\alpha_j$ and $\alpha_k$ is
defined by the condition
$[\sigma_0,\alpha_j]\cap [\sigma_0,\alpha_k]=[{\bf 1},s(\alpha_j,\alpha_k)]$. Here $[\sigma_0,\sigma]$ is
the geodesic in the graph $\Gamma$ joining the first vertex $\sigma_0$ with the vertex
$\sigma$. Let us recall that we assume the graph $\check \Gamma$ to be embedded into the graph $\Gamma$ and $\alpha_i\in \check \Gamma$ for all $i$. This implies that
$ s(j,k):= s(\alpha_j,\alpha_k) \in \check \Gamma$ (and
$s(\alpha_j,\alpha_k) \ge s(\alpha_j, a \alpha_k)$ for $a\in G$).

The whole graph $\check \Gamma$ is constituted by its ``skeleton":
$SK = \bigcup\limits_{i=1}^{r}[\sigma_0, \alpha_i]$ and the segments connecting the
dead ends (i.~e., the vertices $\sigma$ such that $\chi(\overset{\circ}{E}_{\sigma})=1$)
with $SK$. The ratio $M_{\sigma j}/M_{\sigma k}$ is constant for $\sigma$ in $[\sigma_0,s(j,k)]$ and
is a strictly increasing function for $\sigma\in [s(i,j),\alpha_j]\subset \check \Gamma$.
Moreover, this ratio is also constant along the segments connecting the dead ends with $SK$.
The described properties of the ratios $M_{\sigma i}/M_{\tau i}$ imply
that, for each index $i\in \{1,\ldots, r\}$, one has:
\begin{eqnarray}
\frac{1}{M_{\alpha_i i}} \MM_{\alpha_i} & \le &
\frac{1}{M_{\delta i}} \MM_{\delta} \quad \forall \delta\in \check \Gamma\; ,
\nonumber \\
\frac{1}{M_{\alpha_i i}} \MM_{\alpha_i} & < &
\frac{1}{M_{\tau i}} \MM_{\tau} \quad \forall \tau \in SK;\  \tau \neq \alpha_i\; . \label{sk}
\end{eqnarray}
(Here we use the partial order
$\MM=(\seq M1r)\le \MM'=(\seq {M'}1r)$ if $M_i\le M'_i$ for all
$i=1,\ldots,r$;
$\MM < \MM'$ if $\MM\le \MM'$ and $M_i < M'_i$ for at least one $i$.)

Let $\sigma\in \check\Gamma$ be such that the multiplicity
$\MM_{\sigma} = (M_{\sigma 1}, \ldots, M_{\sigma r})$ is
a maximal one among the set of exponents $\MM_{\delta}$ appearing in the factorization
\begin{equation}\label{factor}
\zeta (\tt) = \prod_{\delta\in \check\Gamma\; , \; s(\underline{M}_{\delta})\neq 0}
(1-\tt^{\MM_\delta})^{s(\MM_{\delta})} \; ,
\end{equation}
i.~e., such that
$s(\underline{M}_{\delta})\neq
0$.
(An element $\MM$ from a subset of $\Z_{\ge 0}^{r}$ is maximal if there are no
elements greater than $\MM$.)
The maximality of $\MM_{\sigma}$ implies that, in the (minimal) resolution process, we do not
blow-up a point of the corresponding divisor $E_{\sigma}$. Therefore there exists an
index $i\in \{1,\ldots, r\}$ such that $\sigma=\alpha_j$. Let
$B(\sigma)$ be the set of such indices. Note that the exponent $s(\MM_{\sigma})$
of the binomial $(1-\tt^{\MM_{\sigma}})$ is
$s(\MM_{\sigma})= - n \chi(\overset{\circ}{E}_{\sigma}) $ for some positive integer
$n$ and therefore, if $\sigma=\alpha_j$ for some $j$, one has
$s(\MM_{\sigma})>0$.

Let $A(\sigma)\subset \{1,\ldots,r\}$ be the set of indices $i$, $1\le i \le r$,
such that
$$
\frac{1}{M_{\sigma i}} \MM_{\sigma} \le
\frac{1}{M_{\delta i}} \MM_{\delta}
$$
for all $\delta\in \check \Gamma$ with $s(\MM_{\delta})\neq 0$. The
equations (\ref{sk}) imply that $B(\sigma)\subset A(\sigma)$, however $A(\sigma)$
could contain some indices $\ell$ such that $\alpha_{\ell}\neq \sigma$.

Let us assume that there exists $\ell\in A(\sigma)$ such that
$\alpha_{\ell}\neq \sigma$. By the equations (\ref{sk}) this implies that
$\sigma\in [\sigma_0, \alpha_{\ell}]$ and for all $\delta\in [\sigma, \alpha_{\ell}]$,
$\delta\neq \sigma$, one has that $s(\MM_{\delta})=0$ and so
$\chi(\overset{\circ}{E}_{\delta})=0$. As a consequence the age of $\alpha_{\ell}$
is smaller than the one of $\sigma$ and $\alpha_{\ell}$ is  an end point
of the resolution graph of a branch $C_j$ with $j \in B(\sigma)$. In this case one
has that
$M_{\sigma \ell}<M_{\sigma j}$. On the other hand it is clear that
$M_{\sigma j} = M_{\sigma i}$ if $i,j \in B(\sigma)$ and so one can detect the
indices of $B(\sigma)$ as those $j\in A(\sigma)$ such that
$M_{\sigma j}\ge M_{\sigma \ell}$ for all $\ell\in A(\sigma)$.

For $i_0 \in B(\sigma)$, the binomial $(1-\tt^{\MM_{\sigma}})$ appears in (\ref{factor})
with the exponent
$s(\MM_{\sigma}) = -\chi(\overset{\circ}{E}_{\sigma})(\abs{G}/k_{i_0})$. Thus in
order to finish the proof one has to find an index $i_0\in B(\sigma)$ and to compute
$\chi(\overset{\circ}{E}_{\sigma})$.

First of all, if $(\sigma, \delta)$ is an edge at $\sigma$, the maximality of
$\MM_{\sigma}$ implies that the age of $\delta$ is smaller than the one of $\sigma$.
So, the number $\epsilon(\sigma)$ of edges at $\sigma$ is $\le 2$. The case
$\epsilon(\sigma)=0$ is only possible in the trivial case of two smooth and transversal
branches, in this case the Poincar\'e series (and so $\zeta(\tt)$) is equal to $1$
and we can omit this situation.

Let us assume that $A(\sigma)\neq B(\sigma)$, then
$\epsilon(\sigma) \ge \# (A(\sigma)\setminus B(\sigma))$ and so if
$\# (A(\sigma)\setminus B(\sigma))=2$ (see Figure~\ref{fign}(a)) then
$-\chi(\overset{\circ}{E}_{\sigma})=\# B(\sigma)$.
The case
$\# (A(\sigma)\setminus B(\sigma))= \epsilon(\sigma) = 1$ is possible only
if $A(\sigma)=\{1,\ldots, r\}$, all the branches are smooth, all the branches
of
$B(\sigma)$ split
at the same vertex $\sigma$ and the one in $A(\sigma)\setminus B(\sigma)$ is
transversal to the others
(see Figure~\ref{fign}(b)).
This case is characterized by the fact that
$\zeta(\tt)= (1-\tt^{\MM_{\sigma}})^{(r-2)\abs{G}}$, $M_{\sigma j}=M_{\sigma j'}$ if
$j, j'\in B(\sigma)$; $M_{\sigma_{\ell}}=1$ for $\ell\notin B(\sigma)$.
In the remaining case one has
$\# (A(\sigma)\setminus B(\sigma))= 1$, $\epsilon(\sigma)=2$ and so
$-\chi(\overset{\circ}{E}_{\sigma}) = \# B(\sigma)$.
(See Figure~\ref{fign}(c)).

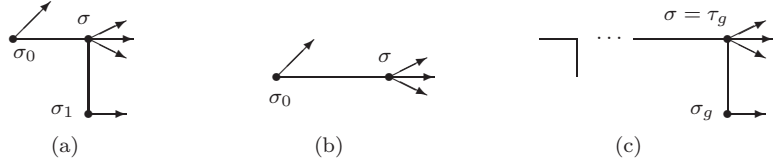
\begin{figure}[h]
$$
\unitlength=0.50mm
\begin{picture}(80.00,40.00)(10,0)
\thinlines
\put(-40,25){\scriptsize{$\sigma_0$}}
\put(-40,30){\line(1,0){20}}
\put(-40,30){\vector(1,1){10}}
\put(-40,30){\circle*{2}}
\put(-20,30){\circle*{2}}
\put(-20,30){\line(0,-1){20}}
\put(-20,10){\circle*{2}}
\put(-30,10){\scriptsize $\sigma_1$}
\put(-20,10){\vector(1,0){10}}
\put(-23,34){{\scriptsize$\sigma$}}
\put(-20,30){\vector(2,1){10}}
\put(-20,30){\vector(2,-1){10}}
\put(-20,30){\vector(1,0){12}}
\put(-30,0){{\scriptsize (a)}}

\put(30,20){\line(1,0){30}}
\put(30,20){\vector(1,1){10}}
\put(30,20){\circle*{2}}
\put(28,13){\scriptsize{$\sigma_0$}}
\put(60,20){\circle*{2}}
\put(57,24){{\scriptsize$\sigma$}}
\put(60,20){\vector(2,1){10}}
\put(60,20){\vector(1,0){12}}
\put(60,20){\vector(2,-1){10}}
\put(40,0){{\scriptsize (b)}}

\put(100,30){\line(1,0){10}}
\put(110,30){\line(0,-1){10}}
\put(115,30){\scriptsize{\ldots}}
\put(125,30){\line(1,0){25}}
\put(150,30){\circle*{2}}
\put(150,30){\line(0,-1){20}}
\put(150,10){\circle*{2}}
\put(140,10){\scriptsize{$\sigma_g$}}
\put(150,10){\vector(1,0){10}}
\put(133,35){{\scriptsize$\sigma=\tau_g$}}
\put(150,30){\vector(2,1){10}}
\put(150,30){\vector(2,-1){10}}
\put(150,30){\vector(1,0){12}}
\put(120,0){{\scriptsize (c)}}

\end{picture}
$$
\caption{The cases with $A(\sigma)\neq B(\sigma$).
}
\label{fign}
\end{figure}

As a consequence of the above discussion
we can assume that $A(\sigma)=B(\sigma)$.
First, assume that there exists $\delta\in \check \Gamma$ such that
$s(\MM_{\delta}) <0 $
and $\MM_{\sigma}= n \MM_{\delta}$ for some positive integer $n$.
The case in which the first vertex $\sigma_0 = \delta$ is the only one with
this condition is only reached if $A(\sigma)= \{1,\ldots,r\}$ and all the branches
are smooth and split at $\sigma$ (see Figure~\ref{fignn}(b)). This case is
characterized by the fact that
$\zeta(\tt) = (1-\tt^{\MM_\sigma})^{(r-1)\abs{G}}(1-\tt^{\MM\delta})^{-\abs{G}}$,
$\MM_{\delta} = (1,\ldots, 1)$ and
$\MM_{\delta} = (k,\ldots, k)$. Otherwise one has
$-\chi(\overset{\circ}{E}_{\sigma}) = \# A(\sigma)$ (see Figures~\ref{fignn}(a) and (c)).

\begin{figure}[h]
$$
\unitlength=0.50mm
\begin{picture}(80.00,40.00)(10,0)
\thinlines
\put(-40,25){\scriptsize{$\sigma_0$}}
\put(-40,30){\line(1,0){20}}
\put(-40,30){\circle*{2}}
\put(-20,30){\circle*{2}}
\put(-20,30){\line(0,-1){20}}
\put(-20,10){\circle*{2}}
\put(-30,10){\scriptsize $\sigma_1$}
\put(-23,34){{\scriptsize$\sigma$}}
\put(-20,30){\vector(2,1){10}}
\put(-20,30){\vector(2,-1){10}}
\put(-20,30){\vector(1,0){12}}
\put(-30,0){{\scriptsize (a)}}

\put(30,20){\line(1,0){30}}
\put(30,20){\circle*{2}}
\put(22,13){\scriptsize{$\delta=\sigma_0$}}
\put(60,20){\circle*{2}}
\put(57,24){{\scriptsize$\sigma$}}
\put(60,20){\vector(2,1){10}}
\put(60,20){\vector(1,0){12}}
\put(60,20){\vector(2,-1){10}}
\put(40,0){{\scriptsize (b)}}

\put(100,30){\line(1,0){10}}
\put(110,30){\line(0,-1){10}}
\put(115,30){\scriptsize{\ldots}}
\put(125,30){\line(1,0){25}}
\put(150,30){\circle*{2}}
\put(150,30){\line(0,-1){20}}
\put(150,10){\circle*{2}}
\put(140,10){\scriptsize{$\sigma_g$}}
\put(133,35){{\scriptsize$\sigma=\tau_g$}}
\put(150,30){\vector(2,1){10}}
\put(150,30){\vector(2,-1){10}}
\put(150,30){\vector(1,0){12}}
\put(120,0){{\scriptsize (c)}}
\end{picture}
$$
\caption{The cases with $\MM_{\sigma}= n\MM_{\delta}$.
}
\label{fignn}
\end{figure}

Let us assume that
there exists $\tau\in \check \Gamma$ with
$s(\MM_{\tau})> 0$ such  that the difference
$\MM_{\sigma i} - \MM_{\tau i}$ is equal to $0$ for all the coordinates $i\notin A(\sigma)$ and is
equal to one and the same constant for those $i\in A(\sigma)$.
In this case
the number of edges at $\sigma$, $\epsilon(\sigma)$,
is equal to 1 and so we finish because
$-\chi(\overset{\circ}{E}_{\sigma}) = \# A(\sigma)-1$.

The only situation when $\epsilon(\sigma)=1$ and the above mentioned element $\tau$
does
not exists is the following one: all the branches from $A(\sigma)$ are
smooth and
split at the vertex $\sigma$; moreover there is no vertices $\delta\neq \sigma$ on the geodesic
$[\sigma_0, \sigma]$ with $s(\MM_{\delta})\neq 0$. Thus, in particular,
$\chi(\overset{\circ}{E}_{\sigma_0})=0$ and so there are two edges on
$\Gamma$ starting at $\sigma_0$. The case when this edges are conjugate by the action of the
group
$G$ is characterized by the fact that $\zeta(\tt) =
(1-\tt^{\MM_{\sigma}})^{(r-1)\abs{G}/2}$.
Otherwise both edges are invariant by the
action of the group, one of them corresponds to the indices from $A(\sigma)$ and the
other one to the remaining ones.
In this case we proceed as follows: let us consider
an element $\MM_{\sigma'}$, maximal among those $\MM_{\delta}$ with
$s(\MM_{\delta})\neq 0$ different from $\MM_{\sigma}$. For this new element
we reproduce
the steps we made before for $\sigma$.  Note that all the indices involved in this
new process are in the  complement $A'(\sigma)$ of $A(\sigma)$ so, there is no
conflict with the previous ones. If, by the previous methods, we are able to determine
an index $i_0\in A'(\sigma)$ and the Euler characteristic
$\chi(\overset{\circ}{E}_{\alpha_{i_0}})$ then we finish. Otherwise we have just a
similar situation for the vertex $\sigma'$, that is we have $\epsilon(\sigma')=1$,
the branches of $A(\sigma')$ are all the branches of $A'(\sigma)$; all of them are
smooth and split at $\sigma'$ and, moreover, there is no vertex $\delta$ on the
geodesic $[1,\sigma']$ such that $s(\MM_{\delta})\neq 0$ (see Figure~\ref{fignnn}).
This case is characterized by the fact that the function zeta is
$$
\zeta(\tt) = (1-\tt^{\MM_{\sigma}})^{(\#A(\sigma)-1)\abs{G}}
(1-\tt^{\MM_{\sigma'}})^{(\#A(\sigma')-1)\abs{G}}
$$
with $\MM_{\sigma i}=1$ (respectively
$\MM_{\sigma' i}=1$) if $i\notin A(\sigma)$ (respectively $i\notin A(\sigma')$) and
with $\MM_{\sigma i}=k$ (respectively
$\MM_{\sigma' i}=k'$) if $i\in A(\sigma)$ (respectively $i\in A(\sigma')$).

\begin{figure}[h]
$$
\unitlength=0.50mm
\begin{picture}(60.00,60.00)(-20,0)
\thinlines
\put(-30,30){\circle*{2}}
\put(-35,23){{\scriptsize$\sigma_0$}}
\put(-30,30){\line(3,1){42}}
\put(-30,30){\line(3,-1){63}}
\put(12,44){\circle*{2}}
\put(12,44){\vector(2,1){10}}
\put(12,44){\vector(2,-1){10}}
\put(30,40){{\scriptsize$A'(\sigma) = A(\sigma')$}}
\put(8,35){{\scriptsize$\sigma'$}}

\put(33,9){\circle*{2}}
\put(33,9){\vector(2,1){10}}
\put(33,9){\vector(2,-1){10}}
\put(33,9){\vector(1,0){12}}
\put(50,7){{\scriptsize $A(\sigma)$}}
\put(30,15){{\scriptsize$\sigma$}}
\end{picture}
$$
\caption{The degenerate case with $A(\sigma')=A'(\sigma)$ and
$\epsilon(\sigma)=\epsilon(\sigma')=1$.}
\label{fignnn}
\end{figure}
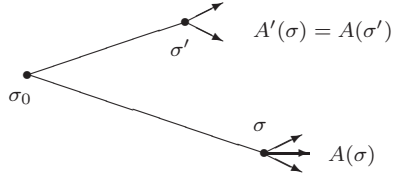

If the above procedure did not produce an index $i_0\in
\{1,\ldots, r\}$ and its Euler characteristic, then for the index $\sigma$ one has that
$\epsilon(\sigma)=2$ and so
$-\chi(\overset{\circ}{E}_{\sigma}) =\# A(\sigma)$. This finishes the determination
of an index $i_0$ from $\{1,\ldots, r\}$, of the Euler characteristic of
${E}_{\alpha_{i_0}}$, and thus of the exponent $\abs{G}/k_{i_0}$.

The method to determine the separation points between the curve $C_{i_0}$ and the
curves $C_i$ with $i\neq i_0$ is literally the same as in the divisorial case.

\end{proof}

%%%%%%%%%%%%%%%%%%%%%%%%%%%%%%%%%%%
\section{The order of the group and the function zeta}\label{sec4}
%%%%%%%%%%%%%%%%%%%%%%%%%%%%%%%%%%%

In Sections~\ref{sec2} and~\ref{sec3} we assumed that the order of the group $G$ was known.
This requirement is really needed in some cases. In Theorem~\ref{theo1} (under the described conditions)
the knowledge of the order of the group is not necessary. One does not use the order of the group
to find the multiplicities $M_{\sigma_i}$, $M_{\tau_i}$ and $M_{\rho_j}$ (for $r=1$). The same procedure
does not give the orders $h_j$ of the isotropy subgroups, but gives all the ratios $h_0/h_j$.
One should modify the equations
for the multiplicities $m_{\sigma_i}$ and $m_{\rho_j}$ used in the proof of Theorem~\ref{theo1} so
that they become equations with respect to $h_{\ell}\cdot m_{\sigma_i}$ for $i=0,
\ldots, p$ and
$h_{\ell}\cdot m_{\rho_j}$ for $j=1,\ldots, \ell$. The integer $h_{\ell}$ (the order
of the isotropy subgroup of the valuation $\nu$) is nothing else but
$\gcd{(h_{\ell}m_{\sigma_0},h_{\ell}m_{\sigma_1},\ldots, h_{\ell}m_{\sigma_p})}$.
The knowledge of $h_{\ell}$ and of the ratio $h_0/h_{\ell}$ gives the order $h_0$ of the group $G$.

In the setting of Theorem~\ref{theo2}, in some situations the multiplicity $M_{\sigma_0}$ can be
determined just at the very beginning. Namely, this is possible if, in the resolution graph of
the curve $C=\bigcup\limits_{a\in G}\bigcup\limits_{i=1}^{\infty}aC_i$, either the number of
edges at the (initial) vertex $\sigma_0$ is different from $2$, or it is equal to
$2$, but these
two edges are not interchanged by the group action. (This means that either the number of lines in the
tangent cone of the curve $C$ is different from $2$, or the tangent cone consists of
two lines not from the same
$G$-orbit.) If the multiplicity $M_{\sigma_0}$ is known, the way to determine the multiplicities
$m_{\sigma_i}$, $m_{\tau_i}$ and $m_{\rho_j}$ and therefore the resolution graph is the same as
in the divisorial case above.

The following example shows that, if the order of the group $G$ is not assumed to be known
and the multiplicity $M_{\sigma_0}$ cannot be determined in the described way, the topological
type of the curve singularity (in fact already with $r=1$) is not determined by the series
$\zeta(t)$. Let $C'$ be the (non-reduced) curve defined by the equation
$(y^2-x^3)^7(x^2-y^3)^7=0$ with the natural (non-trivial) action of the group of order $14$ on its components
and let $C''$ be the curve defined by the equation $(y^2-x^5)^5(x^2-y^5)^5=0$ with the natural
(non-trivial) action of the group of order $10$ on its components. The (minimal) resolution graphs of the
curves $C'$ and $C''$ are shown in Figure~\ref{fig5}.

\begin{figure}[h]
$$
\unitlength=0.50mm
\begin{picture}(150.00,70.00)(0,0)
\thinlines

\put(-30,30){\line(2,1){30}}
\put(-30,30){\line(2,-1){30}}
\put(-30,30){\circle*{2}}
\put(-40,28){\scriptsize{$4$}}

\put(0,45){\circle*{2}}
\put(3,48){\scriptsize{$10$}}
\put(0,45){\vector(1,0){20}}
\put(25,42){\scriptsize{$(7)$}}
\put(0,45){\line(-1,2){10}}
\put(-10,65){\circle*{2}}
\put(-5,62){\scriptsize{$5$}}

\put(0,15){\circle*{2}}
\put(3,18){\scriptsize{$10$}}
\put(0,15){\vector(1,0){20}}
\put(25,12){\scriptsize{$(7)$}}
\put(0,15){\line(-1,-2){10}}
\put(-10,-5){\circle*{2}}
\put(-5,-7){\scriptsize{$5$}}

\put(70,30){\line(4,1){60}}
\put(70,30){\line(4,-1){60}}
\put(70,30){\circle*{2}}
\put(60,28){\scriptsize{$4$}}

\put(100,37.5){\circle*{2}}
\put(98,40){\scriptsize{$6$}}
\put(130,45){\circle*{2}}
\put(133,48){\scriptsize{$14$}}
\put(130,45){\vector(1,0){20}}
\put(155,42){\scriptsize{$(5)$}}
\put(130,45){\line(-1,4){5}}
\put(125,65){\circle*{2}}
\put(130,63){\scriptsize{$7$}}

\put(100,22.5){\circle*{2}}
\put(98,15){\scriptsize{$6$}}
\put(130,15){\circle*{2}}
\put(133,18){\scriptsize{$14$}}
\put(130,15){\vector(1,0){20}}
\put(155,12){\scriptsize{$(5)$}}
\put(130,15){\line(-1,-4){5}}
\put(125,-5){\circle*{2}}
\put(130,-8){\scriptsize{$7$}}

\end{picture}
$$
\caption{The resolution graphs of the curves $C'$ and $C''$}
\label{fig5}
\end{figure}
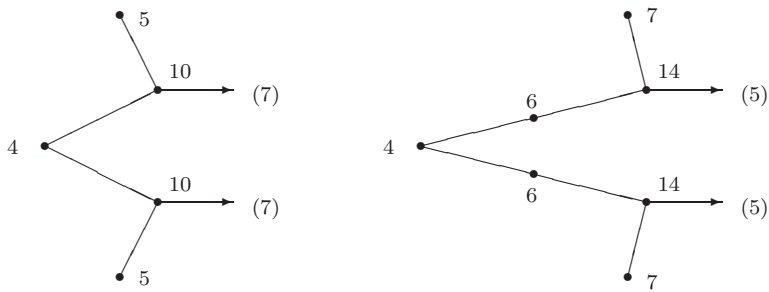

The numbers at the vertices (including the arrows) are the multiplicities of the corresponding
components in the zero divisors of the liftings of the corresponding functions (the left hand sides
of the equations) to the surfaces of resolution.
(These multiplicities define the ages of the vertices and thus the combinatorics
 of the resolutions in an obvious way.)
The A'Campo formula gives
$\zeta(t)=(1-t^{35})^{-2}(1-t^{70})^{2}$ in the both cases.

In the setting of Theorem~\ref{theo1} (i.~e., for divisorial valuations) the
possibility
to find the multiplicity $M_{\tau_g}$ in some cases (namely when the binomial $(1-t^{M_{\tau_g}})$
is present in the decomposition~(\ref{zeta_decomp})) does not permit, in general, to restore the
resolution graph and/or the order of the group. This is shown by the following example.
Let us consider two modification graphs shown on Figure~\ref{fig6} with the
divisorial
valuations corresponding to the vertices marked by the circles with the groups of orders
$14$ and $10$ in the left and in the right hand sides respectively (exchanging the two valuations
in each case).

\begin{figure}[h]
$$
\unitlength=0.50mm
\begin{picture}(150.00,160.00)(0,0)
\thinlines

\put(-10,120){\line(2,1){30}}
\put(-10,120){\line(2,-1){30}}
\put(-10,120){\circle*{2}}
\put(-20,118){\scriptsize{$4$}}

\put(20,135){\circle*{2}}
\put(23,138){\scriptsize{$10$}}
\put(20,135){\line(1,0){100}}
\put(127,135){\scriptsize{$(7)$}}
\put(20,135){\line(-1,2){10}}
\put(10,155){\circle*{2}}
\put(15,152){\scriptsize{$5$}}

\put(40,135){\circle*{2}}
\put(60,135){\circle*{2}}
\put(80,135){\circle*{2}}
\put(100,135){\circle*{2}}
\put(120,135){\circle*{2}}
\put(120,135){\circle{4}}

\put(38,138){\scriptsize{$11$}}
\put(58,138){\scriptsize{$12$}}
\put(78,138){\scriptsize{$13$}}
\put(98,138){\scriptsize{$14$}}
\put(118,140){\scriptsize{$15$}}

\put(20,105){\circle*{2}}
\put(23,108){\scriptsize{$10$}}
\put(20,105){\line(1,0){100}}
\put(127,105){\scriptsize{$(7)$}}
\put(20,105){\line(-1,-2){10}}
\put(10,85){\circle*{2}}
\put(15,87){\scriptsize{$5$}}

\put(40,105){\circle*{2}}
\put(60,105){\circle*{2}}
\put(80,105){\circle*{2}}
\put(100,105){\circle*{2}}
\put(120,105){\circle*{2}}
\put(120,105){\circle{4}}

\put(-30,30){\line(4,1){60}}
\put(-30,30){\line(4,-1){60}}
\put(-30,30){\circle*{2}}
\put(-40,28){\scriptsize{$4$}}

\put(0,37.5){\circle*{2}}
\put(-2,40){\scriptsize{$6$}}
\put(30,45){\circle*{2}}
\put(33,48){\scriptsize{$14$}}
\put(30,45){\line(1,0){140}}
\put(177,45){\scriptsize{$(5)$}}
\put(30,45){\line(-1,4){5}}
\put(25,65){\circle*{2}}
\put(30,63){\scriptsize{$7$}}

\put(50,45){\circle*{2}}
\put(70,45){\circle*{2}}
\put(90,45){\circle*{2}}
\put(110,45){\circle*{2}}
\put(130,45){\circle*{2}}
\put(150,45){\circle*{2}}
\put(170,45){\circle*{2}}
\put(170,45){\circle{4}}

\put(48,48){\scriptsize{$15$}}
\put(68,48){\scriptsize{$16$}}
\put(88,48){\scriptsize{$17$}}
\put(108,48){\scriptsize{$18$}}
\put(128,48){\scriptsize{$19$}}
\put(148,48){\scriptsize{$20$}}
\put(168,50){\scriptsize{$21$}}

\put(0,22.5){\circle*{2}}
\put(-2,15){\scriptsize{$6$}}
\put(30,15){\circle*{2}}
\put(33,18){\scriptsize{$14$}}
\put(30,15){\line(1,0){140}}
\put(177,14){\scriptsize{$(5)$}}
\put(30,15){\line(-1,-4){5}}
\put(25,-5){\circle*{2}}
\put(30,-8){\scriptsize{$7$}}

\put(50,15){\circle*{2}}
\put(70,15){\circle*{2}}
\put(90,15){\circle*{2}}
\put(110,15){\circle*{2}}
\put(130,15){\circle*{2}}
\put(150,15){\circle*{2}}
\put(170,15){\circle*{2}}
\put(170,15){\circle{4}}

\end{picture}
$$
\caption{The modification graphs definning the divisorial valuations.}
\label{fig6}
\end{figure}

The A'Campo type formula gives
$$
\zeta(t)=(1-t^{35})^{-2}(1-t^{70})^{2}(1-t^{105})^{-2}
$$
in the both cases.

%%%%%%%%%%%%%%%%%%%%%%%%%%%%%%%%%%%
\section{Unknotted links with the same Alexander polynomials}\label{sec5}
%%%%%%%%%%%%%%%%%%%%%%%%%%%%%%%%%%

In the setting of Theorem~\ref{theo2} with $r=1$, all the components $a C_1$ of the
curve
$C=\bigcup\limits_{a\in G}aC_1$ are equisingular, that is have the same topological
type.
The attempt to understand whether for the statement to hold it is really necessary that
the components of the curve $C$ are obtained from one of them by a group action on a
modification graph or it is sufficient that all the components are equisingular led
to the following example.

Let $f'(x,y):=(x^3+y^{12})(y+x^2)(y+x^2)(y^3+x^{12})$, $f''(x,y):=(x^3+y^{15})(y+x^2)(y^4+x^{12})$.
The function germs $f'$ and $f''$ are products of function germs right equivalent to the function $x$.
Therefore all the components of the curves $C'=\{f'=0\}$ and $C''=\{f''=0\}$ are equisingular, moreover
the components of the curve $C'$ and the components of the curve $C''$ have the same topological type
(all of them are smooth). The algebraic links $L'=C'\cap S^3_{\eps}$ and $L''=C''\cap S^3_{\eps}$
consist of unknotted components (of $8$ of them each).

 \begin{figure}[h]
$$
\unitlength=0.50mm
\begin{picture}(120.00,60.00)(0,0)
\thinlines
\put(-30,30){\line(3,1){63}}
\put(-30,30){\line(3,-1){63}}
\put(33,51){\vector(1,0){21}}
\put(33,51){\vector(3,1){20}}
\put(33,51){\vector(3,-1){20}}
\put(33,9){\vector(1,0){21}}
\put(33,9){\vector(3,1){20}}
\put(33,9){\vector(3,-1){20}}
\put(-9,23){\vector(3,1){20}}
\put(12,16){\vector(3,1){20}}
\put(-30,30){\circle*{2}}
\put(-37,28){{\scriptsize$8$}}
\put(-9,37){\circle*{2}}
\put(-12,41){{\scriptsize$11$}}
\put(12,44){\circle*{2}}
\put(9,48){{\scriptsize$14$}}
\put(33,51){\circle*{2}}
\put(30,55){{\scriptsize$17$}}
\put(-9,23){\circle*{2}}
\put(-11,16){{\scriptsize$13$}}
\put(12,16){\circle*{2}}
\put(10,8){{\scriptsize$17$}}
\put(33,9){\circle*{2}}
\put(31,2){{\scriptsize$20$}}

\put(80,30){\line(3,1){84}}
\put(80,30){\line(3,-1){42}}
\put(164,58){\vector(1,0){21}}
\put(164,58){\vector(3,1){20}}
\put(164,58){\vector(3,-1){20}}
\put(122,16){\vector(3,1){20}}
\put(122,16){\vector(3,-1){20}}
\put(122,16){\vector(1,0){21}}
\put(122,16){\vector(4,-3){17}}
\put(101,23){\vector(3,1){20}}
\put(101,23){\circle*2{2}}
\put(98,15){{\scriptsize$13$}}
\put(122,16){\circle*{2}}
\put(118,6){{\scriptsize$17$}}

\put(80,30){\circle*{2}}
\put(78,23){{\scriptsize$8$}}
\put(101,37){\circle*{2}}
\put(99,30){{\scriptsize$11$}}
\put(122,44){\circle*{2}}
\put(120,37){{\scriptsize$14$}}
\put(143,51){\circle*{2}}
\put(145,44){{\scriptsize$17$}}
\put(164,58){\circle*{2}}
\put(162,51){{\scriptsize$20$}}
\end{picture}
$$
\caption{The resolution graphs of the functions $f'$ and $f''$.}
\label{fig7}
\end{figure}
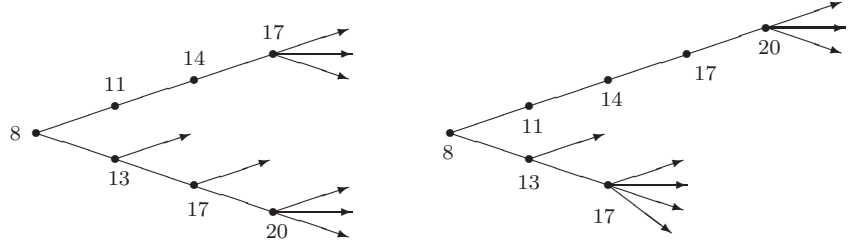

\begin{proposition}
 The curve singularities $C'$ and $C''$ (and thus the links $L'$ and $L''$) are topologically
 different. The monodromy zeta functions of the germs $f'$ and $f''$ (and thus the Alexander
 polynomials in one variable of the links $L'$ and $L''$) coincide.
\end{proposition}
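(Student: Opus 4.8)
The plan is to handle the two assertions of the Proposition separately, using the two minimal embedded resolutions recorded in Figure~\ref{fig7}; the numbers attached to the vertices and arrows there are the multiplicities $m_\sigma$ of the components $E_\sigma$ in the total transform of the corresponding germ (so that the repeated factor of $f'$ is already accounted for in these numbers).

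For the coincidence of the monodromy zeta functions I would simply evaluate the A'Campo type formula~(\ref{ACampo}) with $r=1$, in which case it computes exactly the monodromy zeta function of the germ. For every vertex $\sigma$ one has $\chi(\oE_\sigma)=2-v_\sigma$, where $v_\sigma$ is the number of edges and arrows incident to $\sigma$, so that only the vertices with $v_\sigma\neq 2$ contribute to the product. Reading these off from the two graphs (for $f'$: the two valence-three vertices carrying $13$ and $17$ and the two valence-four vertices carrying $17$ and $20$; for $f''$: the valence-three vertex carrying $13$, the valence-five vertex carrying $17$ and the valence-four vertex carrying $20$), I would check that both products collapse to
$$
\zeta_{f'}(t)=\zeta_{f''}(t)=(1-t^{13})(1-t^{17})^{3}(1-t^{20})^{2}.
$$
Since for $r=1$ the series $\zeta(t)$ is the one variable Alexander polynomial of the corresponding algebraic link, this simultaneously yields the coincidence of the Alexander polynomials of $L'$ and $L''$.

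For the topological inequivalence I would exhibit a numerical topological invariant that differs. The pairwise intersection multiplicities of the branches are topological invariants of the curve (they are the linking numbers of the corresponding components of the link). Here each germ is a product of eight smooth branches: $f'$ has the branches $x=\zeta y^{4}$ ($\zeta^{3}=-1$), $y+x^{2}=0$ (taken twice) and $y=\omega x^{4}$ ($\omega^{3}=-1$), while $f''$ has $x=\zeta y^{5}$, $y+x^{2}=0$ and $y=\omega x^{3}$ ($\omega^{4}=-1$). A direct Puiseux computation gives, for $C'$, two clusters of three branches each meeting pairwise with multiplicity $4$, whereas for $C''$ one cluster of three branches meets pairwise with multiplicity $5$ and a separate cluster of four branches meets pairwise with multiplicity $3$. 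In particular the largest number of branches meeting pairwise with one and the same multiplicity equals $3$ for $C'$ and $4$ for $C''$, so $C'$ and $C''$ (and hence the links $L'$ and $L''$) are topologically different. Equivalently, the decorated resolution graphs of Figure~\ref{fig7} are non-isomorphic.

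The computational heart is the first assertion, and the step deserving care is the bookkeeping for $f'$, whose repeated factor $(y+x^{2})$ contributes two separate arrows (attached at different vertices of the lower chain) and the corresponding multiplicities; once these, together with the Euler characteristics $\chi(\oE_\sigma)$, are correctly extracted from Figure~\ref{fig7}, both parts reduce to short verifications.
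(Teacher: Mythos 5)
Your overall strategy is the paper's: read the decorated graphs of Figure~\ref{fig7}, get the common zeta function from the A'Campo formula, and distinguish the curves topologically (the paper does the latter by non-isomorphism of the graphs; your variant via pairwise intersection multiplicities, i.e.\ linking numbers of the link components, is a legitimate equivalent invariant, and your count ``largest set of branches with constant pairwise intersection multiplicity has $3$ elements for $C'$ and $4$ for $C''$'' is correct). Your valence bookkeeping on the figures is also right and does yield $(1-t^{13})(1-t^{17})^{3}(1-t^{20})^{2}$ in both cases. However, the step you yourself single out as the ``computational heart'' --- the treatment of the repeated factor of $f'$ --- is wrong, and wrong in a way that would make your verification fail if actually carried out. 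Two equal factors $(y+x^{2})$ define one and the same branch; its strict transform is a single curve, hence it contributes a single arrow (entering the multiplicities with weight $2$), attached at one vertex. It cannot ``contribute two separate arrows attached at different vertices of the lower chain'': arrows at different vertices correspond to distinct branches with finite contact, while identical branches never separate under blow-ups. If you resolve the printed $f'=(x^{3}+y^{12})(y+x^{2})^{2}(y^{3}+x^{12})$, the doubled branch splits off at the vertex of multiplicity $13$ and the lower chain has multiplicities $8,13,16,19$ (jumps $5,3,3$), with the three branches $y=\omega x^{4}$ separating at $19$; A'Campo then gives $(1-t^{13})(1-t^{17})^{2}(1-t^{19})^{2}$, which does \emph{not} coincide with $\zeta_{f''}$.

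The resolution of this discrepancy is that the displayed formula for $f'$ in the paper is a misprint: Figure~\ref{fig7}, the count of eight distinct unknotted components, and the stated zeta function all correspond to four pairwise distinct factors, e.g.\ $f'=(x^{3}+y^{12})(y+x^{2})(y+x^{3})(y^{3}+x^{12})$ --- one smooth branch of contact $2$ with the lower cluster (arrow at $13$), one of contact $3$ (arrow at $17$), and the three branches $y=\omega x^{4}$ (arrows at $20$), reproducing the multiplicity jumps $8\to 13\to 17\to 20$. With this correction your argument goes through verbatim: the count $\chi(\oE_{\sigma})=2-v_{\sigma}$ gives the common factorization, and your linking-number invariant distinguishes $C'$ from $C''$. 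But your Puiseux list of the branches of $C'$ (``$y+x^{2}=0$ taken twice'') must be replaced accordingly: as written, your two half-proofs use mutually inconsistent curves --- the zeta computation uses the figure, the branch list uses the misprinted formula --- and the justification you offer to reconcile them is false.
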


\begin{proof}
 The resolution graphs of the function germs $f'$ and $f''$ are shown in Fig~\ref{fig7}.
 The numbers at the vertices are the multiplicities of the corresponding components in the
 zero divisor of the liftings of the functions $f'$ and $f''$ to the corresponding surfaces
 of resolution.
The graphs are topologically different and therefore the
 curve germs $C'$ and $C''$ (and the links $L'$ and $L''$) are topologically different as well.
 (In particular, these links have different Alexander polynomials in several variables.)
 The A'Campo formula gives that in the both cases
 $$
 \zeta(t)=(1-t^{13})(1-t^{17})^3(1-t^{20})^2.
 $$
\end{proof}

Addresses:

A. Campillo and F. Delgado:
IMUVA (Instituto de Investigaci\'on en
Matem\'aticas), Universidad de Valladolid.
Paseo de Bel\'en, 7. 47011 Valladolid, Spain.
\newline E-mail: campillo\symbol{'100}agt.uva.es, fdelgado\symbol{'100}agt.uva.es

S.M. Gusein-Zade:
Moscow State University, Faculty of Mathematics and Mechanics, Moscow, GSP-1, 119991, Russia.
\newline E-mail: sabir\symbol{'100}mccme.ru

\end{document}